\newcommand{\etal}{\textit{et~al.}\xspace}
\newtheorem{definition}{Definition}
\newtheorem{theorem}{Theorem}
\newtheorem{corollary}{Corollary}
\newtheorem{lemma}{Lemma}
\newtheorem{claim}{Claim}
\title{On the Degree Distribution of  P\'{o}lya Urn Graph Processes}
\author{Rasul Tutunov, Haitham Bou Ammar, Ali Jadbabaie, and Eric Eaton\\
\large {University of Pennsylvania, Philadelphia PA 19104}
}
\begin{document}


\begin{abstract}
This paper presents a tighter bound on the degree distribution of arbitrary P\'{o}lya urn  graph processes, proving that the proportion of vertices with degree $d$ obeys a power-law distribution $P(d) \propto d^{-\gamma}$ for $d \leq n^{\sfrac{1}{6}-\epsilon}$ for any $\epsilon > 0$, where $n$ represents the number of vertices in the network. Previous work by Bollob\'{a}s \etal formalized the well-known preferential attachment model of Barab\'{a}si and Albert, and showed that the power-law distribution held for $d \leq n^{\sfrac{1}{15}}$ with $\gamma = 3$.  Our revised bound represents a significant improvement over existing models of degree distribution in scale-free networks, where its tightness is restricted by the Azuma-Hoeffding concentration inequality for martingales. We achieve this tighter bound through a careful analysis of the first set of vertices in the network generation process, and show that the newly acquired is at the edge of exhausting Bollob\'as model in the sense that the degree expectation breaks down for other powers. 
\end{abstract}



\keywords{P\'{o}lya Urn Graph Processes, Linear Chord Diagrams, Power-Law Degree Distribution}



\maketitle

\section{Introduction}
The power-law degree distribution is an interesting property exhibited by many complex networks. Aiming at a better understanding of such a characteristic, Barab\'{a}si and Albert proposed a linear preferential attachment model for generating scale-free networks~\cite{BA1}. The definition of their process, however, was rather informal, as noted by Durret~\cite{Durret}. Since then, different precise forms of such a formulation have been studied in literature \cite{One,Two,Four}. Out of these, the Bollob\'{a}s \etal model \cite{Bollobas1}, adopted in this paper, stands out as it has been at the core of various developments in network studies \cite{BollobasRiodan,Three,Riodan,Sym}. Further, Bollob\'{a}s \etal's approach is quite general, as opposed to alternative interpretations~\cite{Hou2008}, in the sense that the degree distribution is size dependent.



In their approach, Bollob\'{a}s \etal introduce $n$-pairings (i.e., linear chord diagram models) as the procedure to formalize the preferential attachment process. Interestingly, this method allows for the definition of random graphs over $n$ vertices non-recursively.  The model also allows for both loops and multiple edges. The authors prove that the power-law degree distribution can be acquired with $\gamma=3$. The proof, however, operates only when the studied degree $d \leq n^{\sfrac{1}{15}}$. Such loose bounds restrict real-world applications of Bollob\'{a}'s theorem~\cite{Bollobas1} due to the need for extremely large networks (e.g., $10^{15}$ for an individual degree of $10$). 

Aiming at a more realistic setting, this paper contributes by first tightening the degree distribution bound above from $d \leq n^{\sfrac{1}{15}}$ to $d \leq n^{\sfrac{1}{14}-\epsilon}$ for any $\epsilon > 0$. Although successful, we show that this result can be further constricted. We introduce a careful analysis of the first set of vertices in the network generation process and show that the bound can be further improved to $d \leq n^{\sfrac{1}{6}-\epsilon}$ for any $\epsilon > 0$.  To our knowledge, this is the tightest bound for the degree distribution of scale-free networks in such a general setting discovered so far.  Finally, we present a corollary of the second theorem demonstrating that the newly acquired bound is tight to changes in degree exponents.

Our final results provide an improvement over previous work, showing that the degree distribution of vertices in arbitrary networks following from the preferential attachment process obey a power-law distribution $P(d) \propto d^{-\gamma}$ for $d \leq n^{\sfrac{1}{6}-\epsilon}$ for any $\epsilon > 0$. To illustrate, the set of inequalities attained for determining the exponents of the number of vertices are plotted for both theorems proved in this paper. In Figure~\ref{fig:test1}, we show the extend to which Bollob\'{a}'s method can be stretched without the careful analysis of the first set of vertices, leading to an exponent of $\sfrac{1}{14}$. After our careful analysis of the first set of vertices, we are capable of further extending that feasibility region to acquire an exponent of $\sfrac{1}{6}$, see Figure~\ref{fig:test2}.  
\begin{figure}
\centering
\begin{minipage}{.5\textwidth}
  \centering
  \includegraphics[scale=0.2]{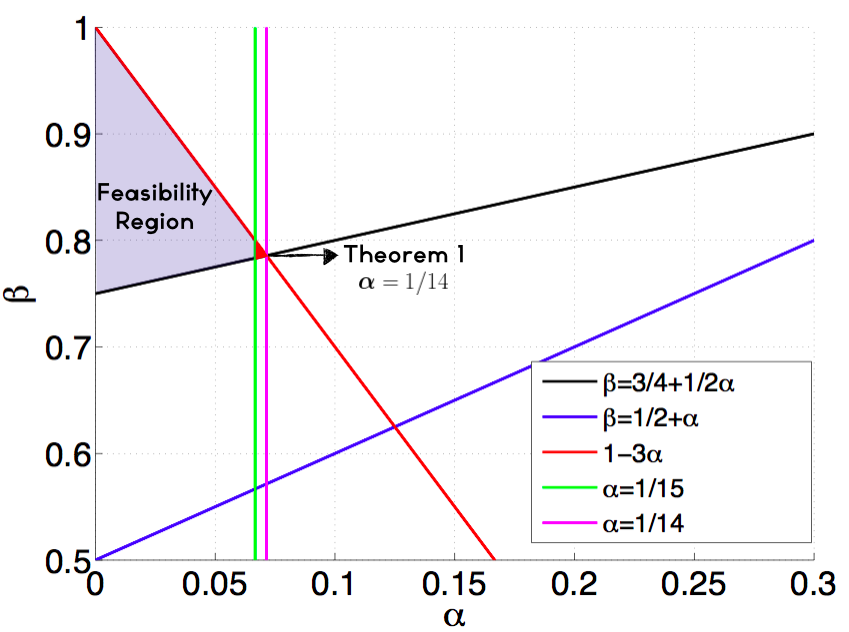}
  \caption{An illustration showing the result attained in Theorem 1, where Bollob\'{a}'s bound is tightened from $\sfrac{1}{15}$ to $\sfrac{1}{14}$.}
  \label{fig:test1}
\end{minipage}%
\begin{minipage}{.5\textwidth}
  \centering
  \includegraphics[scale=.2]{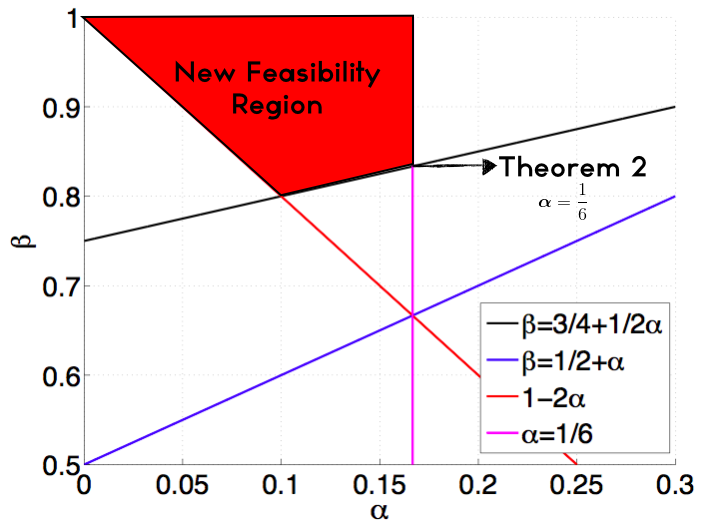}
  \caption{Illustrating the extension of the feasibility region due to our analysis in Theorem 2, being at the edge of exhausting Bollob\'{a}'s method.}
  \label{fig:test2}
\end{minipage}
\end{figure}

This newly acquired bound can be considered tight as its tightness is restricted by the Azuma-Hoeffding concentration inequality for martingales. Our approach is also at the edge of exhausting Bollob\'as \etal's approach, in the sense that the degree expectation breaks down for other powers.

\section{Background}\label{Sec:Back}


\subsection{Preferential Attachment and the P\'{o}lya Urn Process}
The preferential attachment process introduced in~\cite{BA1} can be formalized as a combination of P\'{o}lya urn processes~\cite{Urn}, in the sense that each newly arriving connection can represent a new ball added to the urn corresponding to that vertex. For such a formalization, consider a P\'{o}lya urn process with a two urn model, where the number of balls in one urn represent the degree of a node $\bm{v}_{k}$, and those in the second denote the sum of the degrees for $\bm{v}_{1},\dots, \bm{v}_{k-1}$. The process starts at $n=k$, where node $\bm{v}_{k}$ has exactly $m>0$ connections to $\bm{v}_{1},\dots,\bm{v}_{k-1}$. Recognizing that at this stage the first and second urns start with $m$ and $\left(2k-3\right)m$ balls, respectively, it is easy to see that the evolution is a P\'{o}lya urn with strengths $\bm{\alpha}_{k}$ and $1-\bm{\alpha}_{k}$ with $\bm{\alpha}_{k}\sim\bm{\beta}\left(m,\left(2k-3\right)m\right)$, with $\bm{\beta}(\cdot)$ representing the Beta distribution. 

The aforementioned process enables an accurate definition of the preferential attachment model by setting $\bm{\alpha}_{1}=1$, and for $2\leq k \leq n$, $\bm{\alpha}_{k} \sim \bm{\beta}\left(m,\left(2k-3\right)m\right)$. Further, by letting $\bm{l}_{k}=\sum_{j=1}^{k}\bm{\Phi}_{k}$, with $\bm{\Phi}_{k}=\bm{\alpha}_{k}\prod_{j=k+1}^{n}\left(1-\bm{\alpha}_{j}\right)$ for $1 \leq k \leq n$, an edge between $\bm{v}_{k}$ and $\bm{v}_{j}$ is drawn if for some $i \in [1,m]$, we have $j=\bm{\kappa}\left(\bm{U}_{i,k}\bm{l}_{k-1}\right)$, with $\bm{\kappa}\left(a\right)= \min\left\{k:\bm{l}_{k} \geq a\right\}$ for $a\in[0,1]$ and $\left\{\bm{U}_{i,k}\right\}$ being a sequence of independent random variables for $1\leq i \leq m$ and $1 \leq k \leq n$.

Though appealing, Bollob\'{a}s \etal~\cite{Bollobas1}, among others, presented a procedure to formalize such preferential attachment models based on the concept of $n$-pairings~\cite{Bollobas3,Bollobas2} enabling easier analysis. This framework exhibits multiple advantages, such as constructing graphs  over $n$ vertices non-recursively, and providing compact and tractable representations of degree distributions. Furthermore, this method has been at the core of multiple alternative formalizations of preferential attachment \cite{BollobasRiodan,Three,Riodan,Sym} and therefore we adopt it in this paper as a general framework in the study of the degree distribution of scale-free networks. 


\subsection{$n$-Pairings and Graph Generation}\label{Sec:Pairing}
The idea of $n$-parings~\cite{Bollobas2,Bollobas3} is one of the essential steps required to generate graphs in the formalization introduced by Bollob\'{a}s \etal~\cite{Bollobas1}. An \emph{n-pairing} is a partition of the set $\{1,2,\dots,2n\}$ into pairs, so there are $\left(2n-1\right)!!=\sfrac{(2n)!}{\left(n!2^{n}\right)}$ $n$-pairings~\cite{Bollobas1}. These objects can be viewed as linearized chord diagrams (LCDs)~\cite{Bollobas2}. An LCD with $n$ chords consists of $2n$ distinct points on the x-axis paired by chords. 

Starting from $2n$-nodes, this process first generates a random matching between pairs of nodes.  
A directed graph $\bm{\Psi}\left(\mathcal{P}\right)$ is then formed from an $n$-pairing $\mathcal{P}$ as follows: starting from the left, merge all endpoints including the first right endpoint to form $v_{1}$. At this stage, merge all further endpoints up to the next right endpoint to form $v_{2}$. This procedure is repeated until it reaches $v_{n}$. Edges are created by replacing each pair by a directed edge from the vertex corresponding to its right endpoint to that corresponding to its left endpoint. As noted by Bollob\'{a}s \etal~\cite{Bollobas1}, if $\mathcal{P}$ is chosen uniformly at random from all  $\sfrac{(2n)!}{\left(n!2^{n}\right)}$ $n$-pairings, then $\bm{\Psi}\left(\mathcal{P}\right)$ has the same distribution as a random graph. Such a process can be used to formalize the widely known preferential attachment model of Barab\'{a}si and Albert, as detailed next.  


\subsection{Generating Graphs}\label{Sec:Bollo}
Bollob\'{a}s introduced two processes to formalize the scale-free degree distribution inherent to real-world networks. In the \emph{one-connection preferential attachment process} (Section~\ref{Sec:M1}) the goal is for a new node to make just one connection to the existing nodes in the graph. The \emph{multiple-connections preferential attachment process} (Section~\ref{Sec:M2}), generalizes this idea to $m>1$ connections. In this setting, the graphs are created by applying the one-connection process multiple times.  



\subsubsection{One-Connection Preferential Attachment Process}\label{Sec:M1}
For $m=1$, the goal is for each new node to make only one connection to those nodes that already exist in the graph. This process, $\left(\mathbb{G}_{1}^{(n)}\right)_{n\geq 0}$, is defined inductively so that $\mathbb{G}_{1}^{(n)}$ is a directed graph on $\{v_{i}: 1\leq i\leq n\}$. Formally:
\begin{definition}[One-Connection Preferential Attachment Process~\protect\cite{Bollobas1}:]
Start with $\mathbb{G}_{1}^{(1)}$, the graph with one vertex and one loop. Construct $\mathbb{G}_{1}^{(n)}$ from $\mathbb{G}_{1}^{(n-1)}$ by adding a vertex (node) $v_{n}$ with a single directed edge from $v_{n}$ to $v_{i}$, with $i$ chosen according to: 
 \begin{displaymath}
\mathbb{P}\mathrm{r}(i=s) = \left\{
     \begin{array}{ll}
       \frac{d^{\left(\mathbb{G}_{1}^{(n-1)}\right)}[v_{s}]}{2n-1} &:  1 \leq s \leq n-1\\
       \sfrac{1}{2n-1} &: s=t
     \end{array}
   \right.
   \enspace .
\end{displaymath} 
\end{definition}


\subsubsection{Multiple-Connections Preferential Attachment Process}\label{Sec:M2}
For the case of $m>1$ (i.e., multiple connections), $m$ edges from the new node are added one at a time. The process $\left(\mathbb{G}^{(n)}_{m}\right)_{n\geq 0}$ is defined as following: 
\begin{definition}[Multiple-Connections Preferential Attachment Process~\protect\cite{Bollobas1}:]
The process $\left(\mathbb{G}^{(n)}_{m}\right)_{n\geq 0}$ is defined by running $\left(\mathbb{G}^{(n)}_{1}\right)_{n\geq 0}$ on a set of vertices $v^{\prime}_{1},v^{\prime}_{2}\dots$ to create $\mathbb{G}^{(mn)}_{1}$. The graph $\mathbb{G}^{(n)}_{m}$ is then formed by identifying the vertices $v_{1}^{\prime},\dots, v^{\prime}_{m}$ to form $v_{1}$, $v^{\prime}_{m+1},\dots,v^{\prime}_{2m}$ to form $v_{2}$, and so forth. 
\end{definition}



The probability space for directed graphs on $n$ vertices will be denoted by $\mathcal{G}_{m}^{(n)}$, where $\mathbb{G}_{m}^{(n)} \in \mathcal{G}_{m}^{(n)}$ has the distribution derived from the multiple-connection preferential attachment process described above. As noted by Bollob\'{a}s and Riordan~\cite{Bollobas2}, such a distribution has an alternative description in terms of $n$-pairings with the advantage of a simple and non-recursive definition of the distribution of $\mathbb{G}_{m}^{(n)}$. Due to space constraints, the details of such derivations are omitted in this paper. Interested readers are referred to either the supplementary material accompanying this paper or to Bollob\'{a}s \etal\cite{Bollobas1,Bollobas2} for a more thorough explanation.

\section{Revised Bounds on the Degree Distribution}\label{Sec:MainResults}
This section presents the main results of this paper in the form of two theorems. Due to space constraints, we only provide proof sketches here; extended proofs can be found in the supplementary material accompanying this paper. Theorem~\ref{Theo:Them} tightens the bounds on the in-degree distribution of scale-free networks from $n^{\sfrac{1}{15}}$ to $n^{\sfrac{1}{14}-\epsilon}$ for any $\epsilon > 0$.  Our second main result, summarized in Theorem~\ref{Theo:US}, further tightens the above bound to attain $n^{\sfrac{1}{6}-\epsilon}$ for any $\epsilon > 0$. This bound is the tightest discovered for such a general setting so far. We achieve $n^{\sfrac{1}{6}-\epsilon}$ by performing a detailed and careful analysis of the degree distribution of the first set of vertices in scale-free networks. 

\begin{theorem}
\label{Theo:Them}
Let $m\geq 1$ be fixed and $\left(\mathbb{G}_{m}^{(n)}\right)_{n\geq 0}$ be the process defined in Section~\ref{Sec:M2}. For any $\epsilon >0$ and $1\leq d \leq n^{\sfrac{1}{14}-\epsilon}$: 
\begin{equation*}
\mathbb{P}\mathrm{r}\left[\frac{N\left(n,m,d\right)}{n}\sim \frac{1}{d^{3}}\right] \rightarrow 1, \ \text{as $n\rightarrow \infty$} \enspace ,
\end{equation*}
with $N\left(n,m,d\right)$ being the number of vertices in graph $\mathbb{G}_{m}^{(n)}$ with an in-degree $d$. 
\end{theorem}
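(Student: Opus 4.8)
The plan is to prove the statement in two stages---a first-moment computation followed by a concentration bound---and then to balance their error contributions to read off the admissible range of $d$. Throughout I would work in the $n$-pairing (LCD) representation of $\mathbb{G}_{m}^{(n)}$ from Section~\ref{Sec:Pairing}, since there the underlying randomness is a single uniform pairing and the in-degree of each vertex becomes an explicit functional of it, which makes both the expectation and the Lipschitz estimate tractable. Because the $m>1$ process is obtained by running the one-connection process on $mn$ vertices and then merging in blocks of $m$, it suffices to carry out the analysis in the $m=1$ model driven by the urn strengths $\bm{\alpha}_{k}\sim\bm{\beta}\left(m,(2k-3)m\right)$ and to transfer the conclusion. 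The target is to show that $N(n,m,d)$ concentrates about $\alpha_{m,d}\,n$, where $\alpha_{m,d}=\frac{2m(m+1)}{(d+m)(d+m+1)(d+m+2)}\asymp d^{-3}$, which is exactly the claimed proportionality $N(n,m,d)/n\propto d^{-3}$.

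First I would compute $\mathbb{E}\!\left[N(n,m,d)\right]=\sum_{k=1}^{n}\mathbb{P}\mathrm{r}\left[\text{in-degree of }v_{k}=d\right]$. In the pairing model each summand is a product of ratios governed by the P\'{o}lya urn, so the goal is to establish $\mathbb{E}\!\left[N(n,m,d)\right]=\alpha_{m,d}\,n+E(n,d)$: the leading term arises from replacing the exact urn products by their $\Gamma$-function asymptotics, while the error $E(n,d)$ collects the finite-$n$ corrections to that replacement together with the contribution of the earliest-born vertices. The decisive feature is that $E(n,d)$ deteriorates as $d$ grows, because the relative slack in approximating each urn factor accumulates with the degree; requiring $E(n,d)=o\!\left(\alpha_{m,d}\,n\right)=o\!\left(n\,d^{-3}\right)$ is therefore the binding constraint. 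A sharper accounting of this slack than in Bollob\'{a}s \etal's original argument is what pushes the admissible range from $n^{\sfrac{1}{15}}$ up to $d\le n^{\sfrac{1}{14}-\epsilon}$.

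Next I would establish concentration via Azuma--Hoeffding. Exposing the chords of the uniform pairing one at a time gives a Doob martingale $M_{t}=\mathbb{E}\!\left[N(n,m,d)\mid\mathcal{F}_{t}\right]$ of length $\Theta(mn)=\Theta(n)$. Since each chord fixes a single edge, revealing it changes the in-degree of at most two vertices, each by at most one, so it changes $N(n,m,d)$ by at most an absolute constant; the martingale thus has bounded differences and Azuma yields $\mathbb{P}\mathrm{r}\!\left[\left|N(n,m,d)-\mathbb{E}\!\left[N(n,m,d)\right]\right|\ge\lambda\right]\le 2\exp\!\left(-\lambda^{2}/(Cn)\right)$ for an absolute constant $C$. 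Choosing $\lambda=n^{\sfrac{1}{2}+\delta}$ with $\delta>0$ small drives this probability to $0$ and confines the fluctuation to $o\!\left(n\,d^{-3}\right)$ precisely when $d=o\!\left(n^{\sfrac{1}{6}}\right)$, so in the present theorem the concentration step is not the bottleneck.

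Combining the two stages, $\left|N(n,m,d)-\alpha_{m,d}\,n\right|$ is bounded by $E(n,d)$ plus the martingale fluctuation, and both must be $o\!\left(n\,d^{-3}\right)$ to conclude $N(n,m,d)/n\to\alpha_{m,d}$ in probability. Since the Azuma term already tolerates $d$ up to $n^{\sfrac{1}{6}}$, the restriction $d\le n^{\sfrac{1}{14}-\epsilon}$ is forced entirely by the expectation error $E(n,d)$, and I expect its sharp control to be the main obstacle: one must track how the $\Gamma$-function approximation to the urn products degrades with $d$ and show that the accumulated error nevertheless stays below $n\,d^{-3}$ up to the stated exponent. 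This is precisely the point that Theorem~\ref{Theo:US} will exploit---a more delicate treatment of the first set of vertices, which dominate $E(n,d)$, is what will later reduce the expectation error enough to carry the bound all the way to the Azuma--Hoeffding limit $n^{\sfrac{1}{6}-\epsilon}$.
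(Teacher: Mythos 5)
Your overall architecture coincides with the paper's: reduce to $m=1$ via the LCD representation, compute the first moment with leading term $\frac{2m(m+1)n}{(d+m)(d+m+1)(d+m+2)}\asymp n\,d^{-3}$, concentrate via a bounded-difference Doob martingale and Azuma--Hoeffding at scale $\sqrt{n\log n}$, and observe---correctly, as the paper does in Lemma~\ref{lemma_5}---that the martingale step alone would tolerate $d$ up to $n^{\sfrac{1}{6}}$, so the expectation error must be the bottleneck. (The paper filters by the graph process $\{\mathbb{G}_{m}^{(t)}\}$ rather than by chord exposure, but both give an $O(1)$ difference bound and the same tail, so that discrepancy is immaterial.)

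The genuine gap is that the step carrying the entire content of the theorem---why the admissible exponent is exactly $\sfrac{1}{14}$---is announced but never derived. You posit an error term $E(n,d)$ coming from $\Gamma$-function corrections to the urn products that ``deteriorates as $d$ grows,'' but you supply no mechanism that produces the constraint $\alpha<\sfrac{1}{14}$, and the accumulation-of-relative-slack heuristic does not by itself yield it. In the paper the exponent emerges from a concrete two-parameter trade-off absent from your plan: vertices with $k\notin[M,n-M]$, where $M=\frac{n^{\beta}}{\log n}$, are bounded trivially by $O(M)$ in Lemma~\ref{lemma_3}, which forces $\beta\le 1-3\alpha$ so that $n\,d^{-3}$ dominates; meanwhile the bulk approximation $\Pr\left[d_{k+1}=d+1\right]\approx\sqrt{\frac{k}{n}}\left(1-\sqrt{\frac{k}{n}}\right)^{d}$ is obtained by conditioning on the partial degree sum $D_k$, whose concentration around $2\sqrt{kn}$ (Lemma~\ref{lemma_1}, proved by exact pairing counts, not urn asymptotics) is only good to $O(\sqrt{n\log n})$, and keeping the $d$-th power of the resulting ratio under control requires $2\beta-\alpha>\frac{3}{2}$ and $\beta-\alpha>\frac{1}{2}$ (Claims~\ref{first_cond}--\ref{third_cond}). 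Intersecting $2\beta-\alpha>\frac{3}{2}$ with $\beta\le 1-3\alpha$ gives $2(1-3\alpha)-\alpha>\frac{3}{2}$, i.e.\ $\alpha<\frac{1}{14}$, which is exactly the system in Equation~\ref{req_2}. Without this bookkeeping (or an equivalent quantitative substitute), your argument establishes only that \emph{some} range $d\le n^{\alpha_0}$ works, not that $\alpha_0=\sfrac{1}{14}$; to complete it you would have to quantify, as a function of $d$, how close to the endpoints $k=1$ and $k=n$ the approximation of the conditional law remains valid, which is precisely the $(\alpha,\beta)$ trade-off your sketch leaves open.
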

Intuitively, the above theorem states that the probability for vertices to follow a power-law distribution tends to 1 as the $n$ grows large. The studied degree, in this case, should be restricted in the range of $1\leq d\leq n^{\sfrac{1}{14}-\epsilon}$ for some $\epsilon > 0$, which is tighter than that derived by Bollob\'{a}s \etal\cite{Bollobas1} (i.e., $1\leq d\leq n^{\sfrac{1}{15}}$).  Note that later, we further tighten this bound to $n^{\sfrac{1}{6}-\epsilon}$ as summarized by Theorem~\ref{Theo:US}.


\begin{proof}
Our proof is closely related to that of Bollob\'{a}s \etal\cite{Bollobas1} with major differences including the tightness of the derived bounds. At a high level, the proof is performed in two steps. In the first step, we consider the $m=1$ scenario and then generalize to $m>1$. Considering $m = 1$, the proof is established by proving three Lemmas. 

First, we will prove the following: 
\begin{lemma}\label{lemma_1}
Let $D_k$ be the sum of the total degrees of nodes $v_1, v_2, \ldots v_k$ in graph $\mathbb{G}^{(n)}_{1}$. Then,
\begin{equation*}
\mathbb{P}\mathrm{r}\left[ |D_k - 2\sqrt{4kn}| \ge 4\sqrt{n\log n} \right] \approx \mathcal{O}(n^{-\sfrac{7}{4}}) \enspace .
\end{equation*}
\end{lemma}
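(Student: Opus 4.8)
The plan is to realize $D_k$ (I write $D_k$ for $D_k^{(n)}$, the degree sum of $v_1,\dots,v_k$ in the final graph $\mathbb{G}_1^{(n)}$) as a function of the $n$ edge‑insertions of the one‑connection process and to prove concentration around an explicit centering value. First I would pin down that value. Writing $X_j = D_k^{(j)} - D_k^{(j-1)}$ for $j>k$, the definition of the process gives $X_j \in \{0,1\}$ with
\[
\mathbb{E}[X_j \mid \mathcal{F}_{j-1}] = \frac{D_k^{(j-1)}}{2j-1},
\]
since the new edge from $v_j$ lands on $\{v_1,\dots,v_k\}$ with probability proportional to their current degree sum. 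Hence $\mathbb{E}[D_k^{(j)}\mid\mathcal{F}_{j-1}] = D_k^{(j-1)}\,\tfrac{2j}{2j-1}$, and with the initial value $D_k^{(k)} = 2k$ (the total degree of $\mathbb{G}_1^{(k)}$) this telescopes to
\[
\mathbb{E}\!\left[D_k^{(n)}\right] = 2k \prod_{j=k+1}^{n} \frac{2j}{2j-1}.
\]
A Wallis/Stirling estimate of the product, using $\prod_{j=1}^{m}\tfrac{2j}{2j-1} = 4^{m}/\binom{2m}{m} \sim \sqrt{\pi m}$, identifies this mean with the centering term $2\sqrt{4kn}$ of the statement up to lower‑order corrections, which I would absorb into the deviation margin.

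Next I would set up the concentration. Viewing $D_k^{(n)}$ as a function of the $n$ independent choices underlying the $n$‑pairing, let $Z_i = \mathbb{E}[D_k^{(n)}\mid \mathcal{F}_i]$ be the associated Doob (edge‑exposure) martingale, so $Z_0 = \mathbb{E}[D_k^{(n)}]$ and $Z_n = D_k^{(n)}$. If the bounded‑difference property $|Z_i - Z_{i-1}| \le c$ holds for an absolute constant $c$, then the Azuma–Hoeffding inequality yields
\[
\mathbb{P}\!\left[\,\bigl|D_k^{(n)} - \mathbb{E}[D_k^{(n)}]\bigr| \ge t\,\right] \le 2\exp\!\left(-\frac{t^2}{2nc^2}\right).
\]
Taking $t = 4\sqrt{n\log n}$ makes the exponent $-\Theta(\log n)$, so the right‑hand side is $\exp(-\Theta(\log n)) = O(n^{-7/4})$ for the appropriate constant — exactly the claimed bound, with the factor $4$ in the margin giving room both for the lower‑order terms in the mean and for fixing the precise exponent $\sfrac{7}{4}$.

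The hard part is the bounded‑difference constant $c$. Because attachment is preferential, re‑assigning a single early edge perturbs the transition kernel of every later insertion, so the naive bound ``one edge changes $D_k$ by at most $2$'' is not valid for the conditional expectations $Z_i$. I would control this by a coupling: given two configurations agreeing on the first $i-1$ choices and differing on the $i$‑th, run both forward under a maximal coupling of their insertions and show the expected discrepancy in $D_k^{(n)}$ stays $O(1)$, i.e. the influence of one edge does not amplify. An equivalent, coupling‑free route uses the compensated martingale $M_j = D_k^{(j)}/c_j$ with $c_j = \prod_{i=k+1}^{j}\tfrac{2i}{2i-1}$ (a genuine martingale by the recursion above) together with a variance‑sensitive (Freedman/Bernstein‑type) inequality: the predictable quadratic variation is $\sum_j c_j^{-2}\,p_j(1-p_j)$ with $p_j = D_k^{(j-1)}/(2j-1) \asymp \sqrt{k/j}$, which sums to $O(k)$ and, after rescaling by $c_n^2 \asymp n/k$, again gives effective variance $\Theta(n)$ rather than the $\Theta(n\log n)$ produced by the crude range bound. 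Either way the crux is forcing the effective variance proxy down to $\Theta(n)$; in the compensated route this creates a mild circularity, since the estimate $p_j \asymp \sqrt{k/j}$ presupposes $D_k^{(j-1)}$ near its mean, which I would resolve by a stopping‑time/bootstrapping argument — running the martingale up to the first $j$ at which $D_k^{(j)}$ exits a $\sqrt{j\log j}$‑window around $2\sqrt{kj}$ and bounding that exit probability by the same inequality.
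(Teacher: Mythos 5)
Your overall architecture (compute the mean exactly from the recursion, then apply a martingale concentration inequality) is a genuinely different route from the paper, and your mean computation is correct: the telescoped product gives $\mathbb{E}[D_k^{(n)}] = 2k\prod_{j=k+1}^{n}\frac{2j}{2j-1}\sim 2\sqrt{kn}$, which matches the paper's effective centering $2k+s_{01}$ with $s_{01}\approx 2\sqrt{kn}-2k$ (the ``$2\sqrt{4kn}$'' in the lemma statement is a typo for $2\sqrt{kn}$; the paper's own Claims 3--4 work with $2\sqrt{kn}$). But your first concentration route contains a genuine error: the coupling claim that ``the expected discrepancy in $D_k^{(n)}$ stays $O(1)$'' is false. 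A unit discrepancy in $D_k$ created at step $i$ is itself preferentially amplified: under the natural coupling it grows in expectation by the factor $\prod_{j>i}\bigl(1+\frac{1}{2j-1}\bigr)\asymp\sqrt{n/i}$, exactly the Pólya amplification you used to compute the mean. Hence the Doob differences are $c_i\asymp\sqrt{n/i}$, so $\sum_i c_i^2\asymp n\log(n/k)$, and with $t=4\sqrt{n\log n}$ Azuma--Hoeffding yields only
\begin{equation*}
\exp\left(-\frac{16\,n\log n}{2\,n\log(n/k)}\right)=\exp\left(-\frac{8\log n}{\log(n/k)}\right),
\end{equation*}
which for small $k$ is a \emph{constant} --- the logarithm in the deviation is cancelled by the logarithm in the variance proxy. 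So this route cannot produce any polynomially small bound, let alone $\mathcal{O}(n^{-\sfrac{7}{4}})$.

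Your fallback --- the compensated martingale $M_j=D_k^{(j)}/c_j$ with a Freedman-type inequality --- is the right repair, and your quadratic-variation accounting ($\sum_j p_j(1-p_j)/c_j^2=\mathcal{O}(k)$, rescaling to effective variance $\Theta(n)$) is correct; the stopping-time bootstrap you propose for the circularity in $p_j\asymp\sqrt{k/j}$ is standard and can be made rigorous. However, this route is not uniform in $k$, while the lemma (Claim 3 of the paper: ``for any $1\le k\le n$'') is. The increments of $M$ near $j=k$ have magnitude up to $1/c_{k+1}\approx 1$, the target deviation in the $M$-scale is only $s=t/c_n\asymp\sqrt{k\log n}$, and for $k\lesssim\log n$ Freedman's linear (Bernstein) term dominates, giving merely $\exp(-\Theta(\sqrt{k\log n}))$ --- for bounded $k$ this is $n^{-\Theta(1/\sqrt{\log n})}$, far weaker than $n^{-\sfrac{7}{4}}$. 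The paper avoids martingales entirely here: it counts LCD pairings to get $\mathbb{P}\mathrm{r}[D_k=2k+s]$ in closed form, locates the mode $s_{01}$ by a ratio argument, proves the sub-Gaussian decay $\mathbb{P}\mathrm{r}[D_k=2k+s_{01}\pm l]\le e^{-l(l-1)/4n}$ \emph{uniformly in $k$}, and sums the tail from $l_0=3\sqrt{n\log n}$ to get $n^{-\sfrac{9}{4}}\cdot\mathcal{O}(\sqrt{n/\log n})=\mathcal{O}(n^{-\sfrac{7}{4}})$. Your argument would suffice for $k\ge\log^2 n$ (which is all the paper's later lemmas actually use), but as a proof of the stated lemma it has a genuine gap at small $k$, and the coupling variant should be deleted outright.
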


\begin{proof}
Fixing $1 \le s \le n - k$, we bound the probability of the event $\bm{A} = \{D_k = 2k + s \}$.
The event $\bm{A}$ is equivalent to the event that a set of new nodes $\left(\text{i.e.,}\{v_{k+1}, v_{k+2}, \ldots,  v_n\}\right)$ makes exactly $s$ links with the collection $\{v_1, v_2, \ldots,  v_k\}$. This is true since in the $\mathbb{G}^{(n)}_{1}$  process each new node creates exactly one outgoing link with previous nodes. Precisely, $\{v_1, v_2, \ldots, v_k\}$ exhibit exactly $k$ edges among themselves.

To acquire the value of $\mathbb{P}\mathrm{r}[\bm{A}]$, we will use a well-know result from the $n$-pairings theory~\cite{Bollobas2}. Namely, if $n$-pairings are chosen uniformly randomly, then the corresponding directed graph will exhibit the same distribution as $\mathbb{G}^{(n)}_{1}$:
\begin{equation}\label{form_1}
\mathbb{P}\mathrm{r}\left[\bm{A}\right] = \frac{N(s)}{(2n-1)!!} \enspace ,
\end{equation}
with $N(s)$ representing the number of $n$-pairings in which the $k^{th}$ right-end point corresponds to the LCD-node $2k+s$, with exactly $s$ LCD-nodes in the collection $C_{\text{left}} \triangleq \{1,2,\ldots 2k + s\}$ being left end-points for some $s$ nodes in the collection $C_{\text{right}} \triangleq \{ 2k + s + 1, 2k + s + 2, \ldots 2n \}$. Furthermore, the total number of all $n$-pairings is given by $(2n-1)!! = \sfrac{(2n)!}{2^nn!}$. Notice that the number of ways to pair LCD-node $2k+s$ with: (1) one of the elements in $C_{\text{left}}$, and (2) exactly $2k - s - 2$ elements of $C_{\text{left}}$ with themselves is given by $(2k + s - 1)\left(\begin{array}{c}
2k+s-2 \\
s
\end{array}\right)(2k-2)!!$. Similarly, the number of ways to pair exactly $(2n-2k - 2s)$ numbers of $C_{\text{right}}$ among each other is given by $\left(\begin{array}{c}
2n - 2k - s \\
s
\end{array}\right)(2n-2k - 2s)!!$.

Having these derivations, we can write:
\begin{equation*}
N(s) = s!(2k + s - 1)\left(\begin{array}{c}
2k+s-2 \\
s
\end{array}\right)(2k-2)!!\left(\begin{array}{c}
2n - 2k - s \\
s
\end{array}\right)(2n-2k - 2s)!! 
\enspace,
\end{equation*} 
with $s!$ representing all different ways in which $s$ $C_{\text{left}}$ LCD-nodes can be paired with $s$ nodes in $C_{\text{right}}$.

Given this result, we can rewrite Equation~\ref{form_1} as:
\begin{equation}\label{formula_for_Dk}
\mathbb{P}\mathrm{r}\left[\bm{A}\right] = \frac{(2k + s - 1)!(2n - 2k - s)!n!2^{s+1}}{s!(k-1)!(n-k-s)!(2n)!} \enspace .
\end{equation} 
Note that for a fixed $k$, the function $f(s) = \sfrac{\mathbb{P}\mathrm{r}\left[D_k = 2k + s + 1 \right]}{\mathbb{P}\mathrm{r}\left[D_k = 2k + s\right]} = \sfrac{2(2k+s)(n-k-s)}{(s+1)(2n-2k-s)}$ decreases with $s$. Considering the case where $\sfrac{2(2k+s)(n-k-s)}{(s+1)(2n-2k-s)} = 1$, we have the following solutions for $s$:
\begin{align*}
&s_{01} =  \lceil -2k + \sqrt{4kn - 2n + \sfrac{1}{4}} + \sfrac{1}{2}\rceil   \enspace,\\\nonumber 
&s_{02} = \lfloor -2k - \sqrt{4kn - 2n + \sfrac{1}{4}} - \sfrac{1}{2}\rfloor  \enspace. 
\end{align*}

Using the above statements, we can prove:

\begin{claim}
For all $s^{\prime}$ such that $s^{\prime} <n,$ and $s^{\prime} \ne s_{01},s_{02}$:
\begin{equation*}
\mathbb{P}\mathrm{r}\left[D_k = 2k + s^{\prime}\right] \le \mathbb{P}\mathrm{r}\left[D_k = 2k + s_{01}\right] \enspace .
\end{equation*}
\end{claim}

\begin{proof}
The opposition case can be stated as follows: there $s^{\prime}\ne s_{01}, s_{02}$ such that
$\mathbb{P}\mathrm{r}\left[D_k = 2k + s^{\prime} \right] > \mathbb{P}\mathrm{r}\left[D_k = 2k + s_{01}\right] $. Two cases are to be considered: 
\begin{enumerate}
\item[a)] \textbf{Case One:} $s^{\prime} < s_{01}$ then $f\left(s^{\prime}\right) \ge f(s_{01})$, therefore $f(s^{\prime}) \geq f(s^{\prime} + 1) \geq \ldots \geq f(s_{01} - 1) \geq f(s_{01})$, i.e: $\mathbb{P}\mathrm{r}\left[D_k = 2k + s'\right] \leq \mathbb{P}\mathrm{r}\left[D_k = 2k + s' + 1\right] \leq \mathbb{P}\mathrm{r}\left[D_k = 2k + s' + 2\right] \leq \ldots \leq \mathbb{P}\mathrm{r}\left[D_k = 2k + s_{01}\right]$. Therefore $\mathbb{P}\mathrm{r}\left[D_k = 2k + s'\right] \leq \mathbb{P}\mathrm{r}\left[D_k = 2k + s_{01}\right]$, which contradicts the statement of $s^{\prime}$.

\item[b)] \textbf{Case Two:} $s^{\prime} > s_{01}$ then $f(s') \le f(s_{01})$ then $f(s') \leq f(s_{01}) = 1$, therefore $f(s') \leq f(s' - 1) \leq \ldots \leq f(s_{01} + 1) \leq f(s_{01}) = 1$, i.e: $\mathbb{P}\mathrm{r}\left[D_k = 2k + s'\right] \leq \mathbb{P}\mathrm{r}\left[D_k = 2k + s' - 1\right] \leq \mathbb{P}\mathrm{r}\left[D_k = 2k + s' - 2\right] \leq \ldots \leq \mathbb{P}\mathrm{r}\left[D_k = 2k + s_{01}\right]$. Therefore, $\mathbb{P}\mathrm{r}\left[D_k = 2k + s^{\prime}\right] \leq \mathbb{P}\mathrm{r}\left[D_k = 2k + s_{01}\right]$, which contradicts the statement of $s^{\prime}$. 
\end{enumerate}
\end{proof}

\begin{claim}
Let $l$ be any positive integer, then for large $n$
\begin{equation}\label{ineq_21}
\mathbb{P}\mathrm{r}\left[D_k = 2k + s_{01} + l\right]\le e^{\left(-\sfrac{l(l-1)}{4n} \right)} \enspace .
\end{equation}
\end{claim}
\begin{proof}
Consider the ratio:
\begin{align}\label{ratio_1}
&\frac{f(s+1)}{f(s)} = \frac{(2k+s + 1)(n-k-s -1)(s + 1)(2n - 2k - s)}{(s+2)(2n - 2k -s - 1)(2k + s)(n - k - s)} = \\\nonumber
&\left[ 1 - \frac{2k+1}{(s + 2)(2k + s)} \right]\left[1 - \frac{n - k}{(n - k - s)(2n - 2k - s - 1)} \right] \le  \\\nonumber
&\left[1 - \frac{2k -1}{2n^2} \right]\left[1 - \frac{n-k}{2n^2}\right] \le e^{\left(-\sfrac{2k-1}{2n^2} - \sfrac{n-k}{2n^2}\right)} \le e^{\left(-\sfrac{1}{2n}\right)} \enspace .
\end{align}
Therefore, $f(s_{01}+1)\le e^{\left(-\sfrac{1}{2n}\right)}$. That is., $\mathbb{P}\mathrm{r}\left[D_k = 2k + s_{01} + 1\right] \le e^{\left(-\sfrac{1}{2n}\right)}\mathbb{P}\mathrm{r}\left[D_k = 2k + s_{01}\right]$. This result can be further generalized: $\frac{f(s_{01}+1)}{f(s_{01})} \le e^{\left(-\sfrac{1}{2n}\right)};\frac{f(s_{01}+2)}{f(s_{01}+1)} \le e^{\left(-\sfrac{1}{2n}\right)}; \ldots ; \frac{f(s_{01}+l)}{f(s_{01} + l - 1)} \le e^{\left(-\sfrac{1}{2n}\right)}$. Therefore, the multiplication of these inequalities gives:
\begin{equation*}
\frac{f(s_{01} + l)}{f(s_{01})} \le e^{\left(-\sfrac{l}{2n} \right)} \enspace ,
\end{equation*}
and because $f(s_{01})\le 1$, then $f(s_{01} + l) \le e^{\left(-\sfrac{l}{2n} \right)}$, i.e $\frac{\mathbb{P}\mathrm{r}\left[D_k = 2k + s_{01} + l\right]}{\mathbb{P}\mathrm{r}\left[D_k = 2k + s_{01} + l - 1\right]}\le e^{\left(-\sfrac{l}{2n} \right)}$. 

Given the collection of inequalities:
\begin{equation*}
\begin{cases} \mathbb{P}\mathrm{r}\left[D_k = 2k + s_{01} + l\right] \le e^{\left(-\sfrac{l}{2n} \right)}\mathbb{P}\mathrm{r}\left[D_k = 2k + s_{01} + l - 1\right]  \\ \mathbb{P}\mathrm{r}\left[D_k = 2k + s_{01} + l - 1\right] \le e^{\left(-\sfrac{l-1}{2n} \right)}\mathbb{P}\mathrm{r}\left[D_k = 2k + s_{01} + l - 2\right] \\\vdots \\\mathbb{P}\mathrm{r}\left[D_k = 2k + s_{01} + 1\right] \le e^{\left(-\sfrac{1}{2n} \right)}\mathbb{P}\mathrm{r}\left[D_k = 2k + s_{01}\right] \end{cases}
\end{equation*}
leads us to: 
\begin{equation*}
\mathbb{P}\mathrm{r}\left[D_k = 2k + s_{01} + l\right] \le e^{\left(-\frac{l + (l-1) + \ldots + 1}{2n}\right)} = e^{\left(-\sfrac{l(l-1)}{4n}\right)} \enspace .
\end{equation*} 
\end{proof}

Similarly, it can be shown that for any positive integer $l$:
\begin{equation}\label{ineq_22}
\mathbb{P}\mathrm{r}\left[D_k = 2k + s_{01} - l\right]\le e^{\left(-\sfrac{l(l-1)}{4n} \right)} \enspace .
\end{equation}

\begin{claim}\label{int_arg}
\end{claim}
For any $1 \le k \le n$:
\begin{equation}
\mathbb{P}\mathrm{r}\left[ |D_k - 2k - s_{01}| \ge 3\sqrt{n\log n} \right] \approx \mathcal{O}(n^{-\sfrac{7}{4}}) \enspace .
\end{equation}

\begin{proof}
Using the inequalities of Equations~\ref{ineq_21} and~\ref{ineq_22}, it can be seen that:
\begin{align}\label{sum_1}
&\mathbb{P}\mathrm{r}\left[ |D_k - 2k - s_{01}| \ge 3\sqrt{n\log n} \right] = \mathbb{P}\mathrm{r}\left[ D_k - 2k - s_{01} \ge 3\sqrt{n\log n} \right] + \\\nonumber
& \mathbb{P}\mathrm{r}\left[ |D_k - 2k - s_{01}| \le -3\sqrt{n\log n} \right] = \sum_{l = \lceil3\sqrt{n\log n}\rceil}^{\infty}\mathbb{P}\mathrm{r}\left[D_k = 2k + s_{01} + l\right] + \\\nonumber
&\sum_{l = \lfloor3\sqrt{n\log n}\rfloor}^{\infty}\mathbb{P}\mathrm{r}\left[D_k =  2k + s_{01} - l\right] \le 2\sum_{l= \lceil3\sqrt{n\log n}\rceil}^{\infty}e^{\left(-\sfrac{l(l-1)}{4n}\right)} \enspace .
\end{align}
Let $l_0 = \lceil3\sqrt{n\log n}\rceil$ and consider the ratio:
\begin{align*}
\frac{e^{\left(-\sfrac{(l+1)l}{4n}\right)}}{e^{\left(-\sfrac{l(l-1)}{4n}\right)}} = e^{\left(-\sfrac{l}{2n}\right)} \le  e^{\left(-\sfrac{l_0}{2n}\right)} \enspace .
\end{align*}
Therefore, the tight bound for sum in Equation~\ref{sum_1} has the following form:
\begin{align}\label{sum_2}
\sum_{l= \lceil3\sqrt{n\log n}\rceil}^{\infty}e^{\left(-\sfrac{l(l-1)}{4n}\right)} \le e^{\left(-\sfrac{l_0(l_0 - 1)}{4n}\right)}\frac{1}{1 - e^{\left(-\sfrac{l_0}{4n}\right)}} \approx \mathcal{O}\left(n^{-\sfrac{7}{4}}\right) \enspace .
\end{align}
Combining the inequalities in Equations~\ref{sum_1} and~\ref{sum_2}, we arrive at: 
\begin{equation*}
\mathbb{P}\mathrm{r}\left[ |D_k - 2k - s_{01}| \ge 3\sqrt{n\log n} \right] \approx \mathcal{O}(n^{-\sfrac{7}{4}})\nonumber \enspace .
\end{equation*}
\end{proof}

\begin{claim}\label{sub_arg}
Let event $\bm{B} = \{|D_k - 2k - s_{01}| \ge 3\sqrt{n\log n}\}$ and event $\bm{C} = \{|D_k - 2\sqrt{kn}|\ge 4\sqrt{n\log n}\}$. Then for large $n$:
\begin{equation}
\mathbb{P}\mathrm{r}\left[ \bm{C} \right] \le \mathbb{P}\mathrm{r}\left[ \bm{B} \right] \enspace .
\end{equation}
\end{claim}

\begin{proof}
We first prove that for large $n$:  $\bm{C}\subseteq \bm{B}$. This is true since for each $k$, such that $1 \le k\le n$:
\begin{equation}\label{ineqs01}
|s_{01} + 2k - 2\sqrt{kn}| \le 2\sqrt{n} \enspace .
\end{equation}
Therefore: 
\begin{align}\label{ineqs02}
|D_k  - 2k - s_{01}| &= |D_k - 2k - s_{01} + 2\sqrt{kn} - 2\sqrt{kn}| \\\nonumber
&\ge  |D_k - 2\sqrt{kn}| - | s_{01} +2k - 2\sqrt{kn}| \ge |D_k - 2\sqrt{kn}| - 2\sqrt{n} 
\end{align}
Finally, using (\ref{ineqs02}) for large $n$ we have:
\begin{align}
&\bm{C} = \{|D_k - 2\sqrt{kn}|\ge 4\sqrt{n\log n}\} \subseteq \{|D_k - 2\sqrt{kn}|\ge 2\sqrt{n} + 3\sqrt{n\log n}\} \subseteq \\\nonumber
&\{|D_k - 2k + s_{01}|\ge 3\sqrt{n\log n}\} = \bm{B} 
\end{align}
Therefore, $\mathbb{P}\mathrm{r}\left[ \bm{C} \right] \le \mathbb{P}\mathrm{r}\left[ \bm{B} \right]$.
\end{proof}

Using claims \ref{int_arg} and \ref{sub_arg}, we can conclude that $\mathbb{P}\mathrm{r}\left[ |D_k - 2\sqrt{4kn}| \ge 4\sqrt{n\log n} \right] \approx \mathcal{O}\left(n^{-\sfrac{7}{4}}\right)$ thus proving  Lemma~\ref{lemma_1}. 
\end{proof}

The next claim deals with computing the conditional degree distribution of node $v_{k+1}\in\mathbb{G}^{(n)}_{1}$ given the sum of node (i.e., $\{v_1, v_2, \ldots, v_k\}$) degrees:

\begin{claim}
Let $d_{k+1}$ be the total degree of node $v_{k+1}$ in $\mathbb{G}^{(n)}_{1}$, and $D_k$ be the sum of total degrees of nodes $v_1, \ldots v_k$, then:
\begin{equation}\label{cond_prob_expression}
\mathbb{P}\mathrm{r}\left[ d_{k+1} = d + 1| D_k = 2k + s \right] = \frac{2^d(s + d)!(n - k - s)!(2n - 2k - s - d - 1)!}{(n - k - s - d)!(2n - 2k - s)!}
\enspace ,
\end{equation}
where $0 \le s \le n - k$.
\end{claim}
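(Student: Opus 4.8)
The plan is to compute the conditional probability directly from the $n$-pairing (LCD) picture, exactly in the spirit of Lemma~\ref{lemma_1}, by translating both the conditioning event and the target event into statements about left/right endpoints in the linearized chord diagram. Conditioning on $\bm{A} = \{D_k = 2k+s\}$ fixes that the first $k$ vertices occupy LCD-positions $1,\dots,2k+s$, that position $2k+s$ is the $k$-th right endpoint, and (as already established in the proof of Lemma~\ref{lemma_1}) that exactly $s$ of the left endpoints in $C_{\text{left}}=\{1,\dots,2k+s\}$ remain to be matched into $C_{\text{right}}=\{2k+s+1,\dots,2n\}$. The crucial reduction is that, conditioned on $\bm{A}$, the internal pairing of the first block is irrelevant to $d_{k+1}$: all that matters is that the second block $\{2k+s+1,\dots,2n\}$ carries a uniformly random matching in which $s$ of its points are designated partners of the $s$ ``dangling'' left endpoints and the remaining $2n-2k-2s$ points are paired among themselves.

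Next I would restate the target event in this language. Since $v_{k+1}$ consists of the positions strictly after $2k+s$ up to and including the next right endpoint, the event $\{d_{k+1}=d+1\}$ holds precisely when positions $2k+s+1,\dots,2k+s+d$ are all left endpoints and position $2k+s+d+1$ is a right endpoint. I would then scan the second block from left to right and record the one-step law: if at some position there are $o$ currently open (as yet unmatched) left endpoints and $t$ positions still unprocessed, then a short count of the ways to complete the matching shows the position is a right endpoint with probability $o/t$ and a left endpoint with probability $(t-o)/t$. Starting the scan at position $2k+s+1$ with $(o,t)=(s,\,2n-2k-s)$, each of the $d$ ``left'' steps increments $o$ and decrements $t$, and the closing ``right'' step contributes the factor counting the partner chosen among the then-open endpoints.

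Multiplying the $d+1$ transition probabilities along this path gives a product of $\tfrac{t-o}{t}$ terms times a closing factor, which I would then simplify. The $d$ numerators $2n-2k-2s,\,2n-2k-2s-2,\dots$ are all even, so they contribute $2^d$ times the falling product $(n-k-s)(n-k-s-1)\cdots(n-k-s-d+1)=(n-k-s)!/(n-k-s-d)!$, while the denominators telescope into $(2n-2k-s)!/(2n-2k-s-d)!$; the closing factor then supplies the remaining numerator term and turns $(2n-2k-s-d)!$ into $(2n-2k-s-d-1)!$. Collecting everything reproduces Equation~\ref{cond_prob_expression}. As a consistency check I would verify $\sum_{d\ge 0}\mathbb{P}\mathrm{r}[d_{k+1}=d+1\mid D_k=2k+s]=1$ and that the $d=0$ term equals $s/(2n-2k-s)$, the probability that position $2k+s+1$ is immediately a right endpoint.

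The step I expect to be the main obstacle is the reduction in the first paragraph: making rigorous that conditioning on $\bm{A}$ leaves a \emph{uniform} matching on the second block with exactly $s$ externally-matched points, independent of the first block's internal pairing. Equivalently, one must show that the number $M(s,d)$ of constrained $n$-pairings factors as (first-block factor)$\times$(second-block factor), with the first-block factor identical to the one appearing in $N(s)$ from Lemma~\ref{lemma_1}, so that it cancels in the ratio $M(s,d)/N(s)$. Once this factorization and the one-step law $o/t$ are in hand, the remainder is the routine telescoping above; care is then needed only at the boundary $d\to n-k-s$, where the falling products terminate and the event has probability zero.
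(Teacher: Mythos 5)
Your proposal is correct, and it reaches the result by a genuinely different route from the paper. The paper argues globally: for a fixed admissible pairing of $C_{\text{left}}$, it counts the completions of $C_{\text{right}}$ realizing $\{D_k = 2k+s\}$, namely $s!\binom{2n-2k-s}{s}(2n-2k-2s-1)!!$, and those realizing the joint event $\{d_{k+1}=d+1,\ D_k=2k+s\}$, namely $(s+d)!\binom{2n-2k-s-d-1}{s+d-1}(2n-2k-2s-2d-1)!!$, and then divides. Your left-to-right scan with the one-step law $o/t$ is a sequential refactoring of the same count, and the factorization you flag as the main obstacle is handled in the paper by precisely the observation you anticipated: the second-block count is identical for every first-block pairing realizing the conditioning event, so the first-block factor cancels in the ratio. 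Your route buys a derivation free of double factorials together with a built-in normalization check; the paper's buys a one-line count that reuses the bookkeeping already set up in the proof of Lemma~\ref{lemma_1}.

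One substantive point: your telescoping does \emph{not} literally reproduce Equation~\ref{cond_prob_expression}; it produces
\begin{equation*}
\mathbb{P}\mathrm{r}\left[ d_{k+1} = d + 1 \,\middle|\, D_k = 2k + s \right] = \frac{2^d\,(s+d)\,(n-k-s)!\,(2n-2k-s-d-1)!}{(n-k-s-d)!\,(2n-2k-s)!}\enspace,
\end{equation*}
with a single factor $(s+d)$ where the claim prints $(s+d)!$. Your version is the correct one: the paper's own intermediate ratio simplifies to it, since $(s+d)!/(s+d-1)! = s+d$ after expanding the binomial coefficients, and your $d=0$ consistency check $s/(2n-2k-s)$ confirms it (the printed formula would give $s!/(2n-2k-s)$; concretely, for $n=4$, $k=1$, $s=2$, $d=1$ the printed formula evaluates to $1$ while direct enumeration of the $12$ equally likely completions gives $\sfrac{1}{2}$, and the printed version does not even sum to $1$ over $d$). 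So what you have uncovered is a typo in the paper's final simplification rather than an error in your argument; it is harmless downstream because the asymptotics in Claim~\ref{cl_1_lemma_2} replace the factor by $s+d \approx 2\sqrt{kn}-2k+\mathcal{O}(\sqrt{n\log n})$, i.e., they use your correct single factor, not the factorial.
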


\begin{proof}
To yield $\mathbb{P}\mathrm{r}\left[ d_{k+1} = d + 1| D_k = 2k + s \right]$, we will compute the ratio of number of $n$-pairings defining events $\{ d_{k+1} = d + 1\}$ and $\{D_k = 2k + s \}$ to the number of $n$-pairings defining that of $\{D_k = 2k + s\}$.

Consider the number of $n$-pairings of LCD-nodes $\{C_{\text{left}} = \{1,2,\ldots 2k + s\}$ and $C_{\text{right}} = \{2k + s + 1 ,2k + s + 2, \ldots 2n\}$. For each n pairing of $C_{\text{left}}$ that defines event $\{D_k = 2k + s\}$ there are exactly $s!\left(\begin{array}{c}
2n - 2k - s \\
s
\end{array}\right)(2n - 2k - 2s - 1)$!!    
different $n-$pairings of $C_{\text{right}}$ which define the event $\{D_k = 2k + s\}$. Notice that event $\{d_{k+1} = d + 1\}$ is true if and only if LCD-node $2k + s + d + 1$ is the right end-point for some other LCD-node, and LCD-nodes $\{2k + s + 1, 2k+s+2, \ldots, 2k + s + d\}$ are left points for $d$ LCD-nodes from $C_{\text{shifted right}} \triangleq \{2k + s + d + 1, 2k + s + d + 2, \ldots, 2n\}$. 

Therefore, the number of $n$-pairings of the set $C_{\text{right}}$ which define event $\{d_{k+1} = d + 1\}$ is:
\begin{equation}\label{cond_prob_comb}
(s + d)!\left(\begin{array}{c}
2n - 2k - s - d - 1 \\
s + d - 1
\end{array}\right)(2n - 2k - 2s - 2d - 1)!!
\enspace .
\end{equation}
Furthermore, the number of ways to generate LCD-node $2k + s + d - 1$ is $s + d$. Also, the number of ways to pair $s + d - 1$ LCD-nodes from the set $\{1,2,\ldots 2k + s + d\}$ with these from $C_{\text{shifted right}}$ is: 
$\left(\begin{array}{c}
2n - 2k - s - d - 1 \\
s + d - 1
\end{array}\right)$, each allowing for $(s + d - 1)!$ permutations. Finally, the number of ways to pair the remaining LCD-nodes in $C_{\text{shifted right}}$ among each other is given by $(2n - 2k - 2s - 2d - 1)!!$. 
Letting $\bm{A}$ represent the event $d_{k+1}=d+1$, and using Equations~\ref{formula_for_Dk} and~\ref{cond_prob_comb}, we see that: 
\begin{align*}
\mathbb{P}\mathrm{r}\left[\bm{A}| D_k = 2k + s \right] &= \frac{(s+d)!(2n - 2k - 2s -2d - 1)!!\left(\begin{array}{c}
2n - 2k - s - d - 1 \\
s + d - 1
\end{array}\right)}{s!(2n - 2k - 2s - 1)!!\left(\begin{array}{c}
2n - 2k - s \\
s
\end{array}\right)} \\\nonumber
&=\frac{2^d(s + d)!(n - k -s)!(2n - 2k - s - d - 1)!}{(n - k - s -d)!(2n - 2k - s)!} \enspace .
\end{align*}
\end{proof}
The next lemma shows that the degree distribution for $v_k\in\mathbb{G}^{(n)}_{1}$ with index $k$ is bounded in  $M \le k \le n-M$, with $M$ representing the threshold defined as $M = \frac{n^\beta}{\log n}$ with $0 \le \beta \le 1$.

\begin{lemma}\label{lemma_2}
Let $d \le n^\alpha$ with $\alpha\in [0, \frac{1}{3}]$ and $d_{k+1}$ be the total degree of node $k + 1$, where $k\in[M, n-M]$. Then for all $(\alpha, \beta)$ such that:
\begin{equation}\label{req_1}
\begin{cases} 2\beta - \alpha > \frac{3}{2}  \\ \beta - \alpha > \frac{1}{2}\end{cases}
\enspace ,
\end{equation}  
the following is true for large $n$ :
\begin{equation}\label{degree_distr_approx}
\mathbb{P}\mathrm{r}\left[d_{k+1} = d + 1 \right] \approx (1 + o(1))\sqrt{\frac{k}{n}}\left(1 - \sqrt{\frac{k}{n}} \right)^d + \mathcal{O}(n^{-\sfrac{7}{4}}) \enspace .
\end{equation}  
\end{lemma}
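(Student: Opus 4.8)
The plan is to obtain the unconditional probability by conditioning on $D_k$ through the law of total probability,
\[
\mathbb{P}\mathrm{r}\left[d_{k+1}=d+1\right]=\sum_{s=0}^{n-k}\mathbb{P}\mathrm{r}\left[d_{k+1}=d+1\mid D_k=2k+s\right]\,\mathbb{P}\mathrm{r}\left[D_k=2k+s\right],
\]
feeding in the conditional formula \eqref{cond_prob_expression} of the preceding Claim and controlling the mixing weights $\mathbb{P}\mathrm{r}[D_k=2k+s]$ by Lemma~\ref{lemma_1}. First I would restrict the sum to the concentration window $I=\{s:|D_k-2\sqrt{kn}|\le 4\sqrt{n\log n}\}$ centred at $s_{01}\approx 2\sqrt{kn}-2k$. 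Since every conditional probability is at most $1$, Lemma~\ref{lemma_1} bounds the contribution of the complementary range by $\mathbb{P}\mathrm{r}[s\notin I]=\mathcal{O}(n^{-\sfrac{7}{4}})$, which is exactly the additive error appearing in \eqref{degree_distr_approx}.

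Second, I would evaluate the conditional probability at the centre. Rewriting the double factorials in \eqref{cond_prob_expression} through the identity $(2m-1)!!=\sfrac{(2m)!}{2^m m!}$ and replacing the falling/rising factorial ratios by powers, the conditional probability $p(s)$ collapses to
\[
p(s)=(1+o(1))\,\frac{s+d}{2n-2k-s}\left(\frac{2n-2k-2s}{2n-2k-s}\right)^{d}.
\]
Evaluating at $s=s_{01}$ and using $n-k-s_{01}=(\sqrt n-\sqrt k)^2(1+o(1))$ together with $s_{01}/(2n-2k-s_{01})\to\sqrt{k/n}$ turns the prefactor into $\sqrt{k/n}$ and the bracketed base into $1-\sqrt{k/n}$, yielding the target $\sqrt{k/n}\,(1-\sqrt{k/n})^{d}$.

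Third — and this is where the hypotheses on $(\alpha,\beta)$ are consumed — I would show that $p$ is essentially flat across the window, i.e.\ $p(s)=(1+o(1))\,p(s_{01})$ uniformly for $s\in I$, so that $p(s_{01})$ factors out of the window sum; since $\sum_{s\in I}\mathbb{P}\mathrm{r}[D_k=2k+s]=1-\mathcal{O}(n^{-\sfrac{7}{4}})$, the window sum reduces to $p(s_{01})$, and adding the tail produces \eqref{degree_distr_approx}. Flatness follows from bounding $|I|$ times $\left|\frac{d}{ds}\log p(s)\right|$, whose dominant term is $d\,\frac{\rho'}{\rho}$ with $\rho(s)=\frac{2n-2k-2s}{2n-2k-s}$ and $\frac{\rho'}{\rho}=\frac{-2(n-k)}{(2n-2k-s)(2n-2k-2s)}$.

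The hard part is that this control is far from uniform in $k$: the log-derivative blows up at the upper end $k\approx n-M$, where $n-k-s_{01}=(\sqrt n-\sqrt k)^2$ shrinks to order $M^2/n$. There the power approximation of the falling factorial costs a relative error of order $\frac{d^2}{n-k-s_{01}}\asymp n^{1+2\alpha-2\beta}$ up to logarithms, while sweeping $s$ across the window of width $\sqrt{n\log n}$ costs $\frac{nd}{M^2}\sqrt{n\log n}\asymp n^{\alpha+\sfrac{3}{2}-2\beta}$ up to logarithms. Forcing both to be $o(1)$ yields precisely $\beta-\alpha>\sfrac{1}{2}$ and $2\beta-\alpha>\sfrac{3}{2}$, the two inequalities in \eqref{req_1}; the lower end $k\approx M$ is harmless since there $n-k-s_{01}\approx n$. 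Combining the flat window sum with the $\mathcal{O}(n^{-\sfrac{7}{4}})$ tail then establishes the lemma.
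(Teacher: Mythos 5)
Your proposal is correct and follows essentially the same route as the paper: a law-of-total-probability split over the concentration window $|D_k - 2\sqrt{kn}| \le 4\sqrt{n\log n}$ from Lemma~\ref{lemma_1} (tail bounded by $\mathcal{O}(n^{-\sfrac{7}{4}})$ since conditional probabilities are at most $1$), followed by a uniform asymptotic evaluation of the conditional formula~(\ref{cond_prob_expression}) inside the window, with the inequalities in~(\ref{req_1}) emerging from exactly the same binding error terms at $k \approx n - M$ (the paper's Claims~\ref{first_cond}--\ref{third_cond}). Your only organizational difference --- evaluating at the center $s_{01}$ and proving flatness via the log-derivative, rather than substituting the window bounds directly as the paper does --- is cosmetic, and your exponent accounting ($n^{\alpha + \sfrac{3}{2} - 2\beta}$ and $n^{1 + 2\alpha - 2\beta}$ up to logarithms) matches the paper's.
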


\begin{proof}
Fix $D = 2k + s$ such that $|D - 2\sqrt{kn}| \le 4\sqrt{n\\log  n}$ and consider the conditional probability $\mathbb{P}\mathrm{r}\left[d_{k+1} = d + 1 | D_k = D \right]$.

\begin{claim}\label{cl_1_lemma_2}
Let $k\in[M, n- M]$ and $|D - 2\sqrt{kn}| \le 4\sqrt{n\\log  n}$. Then for large $n$:
\begin{equation}\label{approx_for_cond}
\mathbb{P}\mathrm{r}\left[d_{k+1} = d + 1 | D_k = D \right] \approx 2^d\frac{(2\sqrt{kn} - 2k + \mathcal{O}(\sqrt{n\log n}))}{(2n - 2\sqrt{kn} + \mathcal{O}(\sqrt{n\log n}))}\left(\frac{(\sqrt{n} - \sqrt{k})^2 + \mathcal{O}(\sqrt{n\log n})}{2n - 2\sqrt{kn} + \mathcal{O}(\sqrt{n\log n})}\right)^d.
\end{equation}
\end{claim}

\begin{proof}
Knowing that $D = 2k + s$, the inequality $|D - 2\sqrt{kn}| \le 4\sqrt{n\log n}$ gives the following:
\begin{align*}
&\-4\sqrt{n\log n} \le 2k + s - 2\sqrt{kn} \le 4\sqrt{n\log n} \\
&\Rightarrow 
2\sqrt{kn} - 2k - 4\sqrt{n\log n} \le s  \le 2\sqrt{kn} - 2k  + 4\sqrt{n\log n} \enspace .
\end{align*}
Using $d \le n^{\alpha}$, where $\alpha\in [0, \frac{1}{3}]$ for large, $n$ we have: 
\begin{equation}\label{appr_1}
n - k - s \approx n + k - 2\sqrt{kn} + \mathcal{O}(\sqrt{n\log n})
\end{equation}
\begin{equation}\label{appr_2}
2n - 2k - s \approx 2n - 2\sqrt{kn} + \mathcal{O}(n\log n)
\end{equation}
\begin{equation}\label{appr_3}
s + d \approx 2\sqrt{kn} - 2k + \mathcal{O}(\sqrt{n\log n}) \enspace .
\end{equation}


Using the results of Equations~\ref{appr_1},~\ref{appr_2},~and \ref{appr_3}), the conditional probability can be written as: 
\begin{align*}
\mathbb{P}\mathrm{r}\left[ d_{k+1} = d + 1| D_k = 2k + s \right] &= \frac{2^d(s + d)!(n - k - s)!(2n - 2k - s - d - 1)!}{(n - k - s - d)!(2n - 2k - s)!}  \\
&\hspace{-2em}=\frac{2^d(s + d)!(n - k - s)(n-k- s - 1)\cdots(n - k - s - d + 1)}{(2n - 2k - s - 1)(2n - 2k - s - 2)\cdots(2n - 2k - s - d)} \\
&\hspace{-2em}\approx 2^d\frac{(2\sqrt{kn} - 2k + \mathcal{O}(\sqrt{n\log n})}{(2n - 2\sqrt{kn} + \mathcal{O}(\sqrt{n\log n}))}\left(\frac{(\sqrt{n} - \sqrt{k})^2 + \mathcal{O}(\sqrt{n\log n})}{2n - 2\sqrt{kn} + \mathcal{O}(\sqrt{n\log n})}\right)^d \enspace .
\end{align*}   
\end{proof}

\begin{claim}\label{first_cond}
Let $k\in [M, n-M]$, and $d\le n^\alpha$ with $\alpha\in [0, \frac{1}{3}]$. Then for $(\alpha, \beta)$ such that $2\beta - \alpha > \frac{3}{2}$:
\begin{equation*}
\frac{(\sqrt{n} - \sqrt{k})^2}{d\sqrt{n\log n}} \to \infty
\end{equation*}
 as $n\to \infty$.
\end{claim}
\begin{proof}
Since $k\in[M, n-M]$:
\begin{align}\label{ratio_ineq}
\frac{1}{n^{1 - \beta}\log n}\le \frac{k}{n}\le 1 - \frac{1}{n^{1 - \beta}\log n}\Rightarrow
\left(1 - \sqrt{\frac{k}{n}}\right)^2 \ge \left(1 - \sqrt{1 - \frac{1}{n^{1 - \beta}\log n}}\right)^2 
\end{align}
Using a Taylor expansion of $(1 - \frac{1}{n^{1 - \beta}\log n})^{\sfrac{1}{2}} \approx 1 - \frac{1}{2n^{1 - \beta}\log n}$ for large $n$, and using Equation~\ref{ratio_ineq}, we arrive at:
\begin{align*}
\frac{(\sqrt{n} - \sqrt{k})^2}{d\sqrt{n\log n}}\ge \frac{n\left(1 - \sqrt{\frac{k}{n}}\right)^2}{n^{\sfrac{1}{2} + \alpha}\sqrt{\log n}}\ge \frac{n^{\sfrac{1}{2}+ \alpha}}{\sqrt{\log n}}\left(1 - \sqrt{1 - \frac{1}{n^{1 - \beta}\log n}}\right)^2\approx 
\frac{n^{\sfrac{1}{2} - \alpha - 2(1 - \beta)}}{4\log ^{\sfrac{5}{2}}n} \enspace .
\end{align*}
In the case that $2\beta - \alpha > \frac{3}{2}$, the last expression tends to infinity as $n\to\infty$. 
\end{proof}

\begin{claim}\label{secon_cond}
Let $k\in [M, n-M]$, and $d\le n^\alpha$ with $\alpha\in [0, \frac{1}{3}]$. Then for $(\alpha, \beta)$ such that $\beta - \alpha > \frac{1}{2}$:
\begin{equation*}
\frac{(2n - 2\sqrt{kn})}{d\sqrt{n\log n}} \to \infty
\end{equation*}
 as $n\to \infty$
\end{claim}
\begin{proof}
Using Equation~\ref{ratio_ineq} and following a similar procedure to that in Claim~\ref{first_cond} we arrive at:
\begin{align*}
\frac{(2n - 2\sqrt{kn})}{d\sqrt{n\log n}} = \frac{2\sqrt{n}\left(1 - \sqrt{\frac{k}{n}}\right)}{d\sqrt{\log n}}\ge \frac{2n^{\sfrac{1}{2} - \alpha}\left(1 - \sqrt{\frac{1}{n^{1-\beta}\log n}}\right)}{\sqrt{\log n}}\approx 
\frac{n^{-\sfrac{1}{2} - \alpha + \beta}}{\log ^{\sfrac{3}{2}}n}
\end{align*}
In the case that $\beta - \alpha > \frac{1}{2}$, the last expression tends to infinity as $n\to\infty$. 
\end{proof}

\begin{claim}\label{third_cond}
Let $k\in [M, n-M]$, and $d\le n^\alpha$ with $\alpha\in [0, \frac{1}{3}]$. Then for $\beta > 0$:
\begin{equation*}
\frac{\sqrt{n\log n}}{2\sqrt{kn} - 2k} \to 0
\end{equation*}
 as $n\to \infty$.
\end{claim}

\begin{proof}
Using Equation~\ref{ratio_ineq}:
\begin{equation}\label{ratio_ineq_2}
\frac{1}{1 - \sqrt{\frac{k}{n}}}\le\frac{1}{1 - \frac{1}{\sqrt{n^{1 - \beta}\log n}}} \enspace .
\end{equation}
Therefore,
\begin{align}\label{ratio_ineq_3}
\frac{\sqrt{n\log n}}{2\sqrt{kn} - 2k} &= \frac{\sqrt{n\log n}}{2\sqrt{k}(\sqrt{n} - \sqrt{k})} = \frac{\log ^{\sfrac{1}{2}}}{2\sqrt{k}\left(1 - \sqrt{\frac{k}{n}}\right)}  \\\nonumber
&\le \frac{\log ^{\sfrac{1}{2}}n}{2\sqrt{k}}\left(\frac{1}{1 - \frac{1}{\sqrt{n^{1-\beta}\log n}}}\right) 
\end{align}
Notice that $\frac{1}{1 - \frac{1}{\sqrt{n^{1-\beta}\log n}}} \le \frac{\sqrt{2}}{\sqrt{2} - 1}$ for large $n$. Therefore, for Equation~\ref{ratio_ineq_3}:
\begin{equation}
\frac{\sqrt{n\log n}}{2\sqrt{kn} - 2k} \le \frac{\sqrt{2}}{\sqrt{2} - 1} \frac{\log ^{\sfrac{1}{2}}n}{2\sqrt{k}} \enspace .
\end{equation}
Similarly to the proofs of the previous two claims, if $\beta > 0$ then the last expression goes to $0$ as $n\to\infty$.  
\end{proof}

Applying the results of Claims~\ref{first_cond},~\ref{secon_cond} and \ref{third_cond}  to Equation~\ref{approx_for_cond} we deduce:
\begin{align}\label{approx_for_cond_last_version}
\mathbb{P}\mathrm{r}\left[d_{k+1} = d + 1 | D_k = D \right] &\approx 2^d\frac{(2\sqrt{kn} - 2k + \mathcal{O}(\sqrt{n\log n}))}{(2n - 2\sqrt{kn} + \mathcal{O}(\sqrt{n\log n}))}\left(\frac{(\sqrt{n} - \sqrt{k})^2 + \mathcal{O}(\sqrt{n\log n})}{2n - 2\sqrt{kn} + \mathcal{O}(\sqrt{n\log n})}\right)^d \\\nonumber
&\approx (1 + o(1))\sqrt{\frac{k}{n}}\left(1 - \sqrt{\frac{k}{n}}\right)^d \enspace .
\end{align}

Finally, by letting $\bm{D}_{\alpha}= \{D:|D - 2\sqrt{kn}| \le 4\sqrt{n\log n}\}$ and using the law of total probabilities the following can be written: 
\begin{align}\label{degree_distr}
&\mathbb{P}\mathrm{r}\left[d_{k+1} = d + 1 \right] = \sum_{D}\mathbb{P}\mathrm{r}\left[d_{k+1} = d + 1,  D_k =  D\right]  \\\nonumber
&= \sum_{D\in \bm{D}_{\alpha}}\mathbb{P}\mathrm{r}\left[d_{k+1} = d + 1,  D_k =  D\right] + \sum_{D \notin \bm{D}_{\alpha}}\mathbb{P}\mathrm{r}\left[d_{k+1} = d + 1,  D_k =  D\right] 
\end{align}
At this stage, the goal is to bound each of the two sums in Equation~\ref{degree_distr}. For the first, using Equation~\ref{approx_for_cond_last_version} and Lemma~\ref{lemma_1}, for large $n$ we have:
\begin{align}\label{sum_11}
\sum_{\bm{D}_{\alpha}}\mathbb{P}\mathrm{r}\left[d_{k+1} = d + 1,  D_k =  D\right] &= \sum_{\bm{D}_{\alpha}}\mathbb{P}\mathrm{r}\left[d_{k+1} = d + 1|D_k =  D\right]\mathbb{P}\mathrm{r}\left[D_k =  D\right] \\\nonumber
&\approx(1 + o(1))\sqrt{\frac{k}{n}}\left(1 - \sqrt{\frac{k}{n}}\right)^d\sum_{\bm{D}_{\alpha}}\mathbb{P}\mathrm{r}\left[D_k =  D\right]  \\\nonumber
&=(1 + o(1))\sqrt{\frac{k}{n}}\left(1 - \sqrt{\frac{k}{n}}\right)^d\mathbb{P}\mathrm{r}\left[|D_k - 2\sqrt{kn}|\le 4\sqrt{n\log n}\right]\\ \nonumber 
&\approx(1 + o(1))\sqrt{\frac{k}{n}}\left(1 - \sqrt{\frac{k}{n}}\right)^d(1 - \mathcal{O}(n^{-\sfrac{7}{4}})) \\\nonumber
&\approx(1 + o(1))\sqrt{\frac{k}{n}}\left(1 - \sqrt{\frac{k}{n}}\right)^d \enspace .
\end{align}
Similarly, for the second sum in Equation~\ref{degree_distr}:
\begin{align}\label{sum_12}
\sum_{D \notin \bm{D}_{\alpha}}\mathbb{P}\mathrm{r}\left[d_{k+1} = d + 1,  D_k =  D\right]&\le \sum_{D\notin \bm{D}_{\alpha}}\mathbb{P}\mathrm{r}\left[D_k =  D\right] \\\nonumber
&\approx\mathbb{P}\mathrm{r}\left[|D_k - 2\sqrt{kn}| > 4\sqrt{n\log n}\right]\approx \mathcal{O}(n^{-\sfrac{7}{4}}) \enspace .
\end{align}
Combining the results of Equations~\ref{sum_11} and \ref{sum_12}:
\begin{equation*}
\mathbb{P}\mathrm{r}\left[d_{k+1} = d + 1 \right] \approx(1 + o(1))\sqrt{\frac{k}{n}}\left(1 - \sqrt{\frac{k}{n}}\right)^d + \mathcal{O}(n^{-\sfrac{7}{4}}) \enspace ,
\end{equation*}
which proves Lemma~{\ref{lemma_2}}.
\end{proof}

The next lemma computes the expectation of the number of nodes in $\mathbb{G}^{(n)}_{1}$ with a total degree of $d + 1$:
\begin{lemma}\label{lemma_3}
Let $N(d + 1)$ be the number of nodes in $\mathbb{G}^{(n)}_{1}$ that has total degree $d + 1$, where $d\le n^{\alpha}$, with $\alpha\in[0, \frac{1}{3}]$. Then for large $n$:
\begin{equation}\label{exp_approx}
\mathbb{E}(N(d + 1)) \approx \mathcal{O}(M) + \mathcal{O}(n^{-\sfrac{3}{4}}) + (1 + o(1))\frac{4n}{(d+1)(d+2)(d+3)} \enspace ,
\end{equation}
and for $(\alpha,\beta)$ such that $1 - 3\alpha > \beta$:
\begin{equation*}
\mathbb{E}(N(d + 1))\approx \frac{n}{d^3} \enspace .
\end{equation*} 
\end{lemma}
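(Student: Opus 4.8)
The plan is to express $N(d+1)$ as a sum of indicators over the nodes and convert the resulting sum of per-node probabilities into an integral by means of Lemma~\ref{lemma_2}. Writing $N(d+1) = \sum_{k=1}^{n}\mathbf{1}[d_k = d+1]$, linearity of expectation gives $\mathbb{E}(N(d+1)) = \sum_{k=1}^{n}\mathbb{P}\mathrm{r}[d_k = d+1]$. I would partition the index set into the two end regimes $k < M$ and $k > n-M$, which together contain $\mathcal{O}(M)$ nodes, and the bulk $M \le k \le n-M$. Bounding each probability in the end regimes crudely by $1$ produces the $\mathcal{O}(M)$ term; this is precisely where the restriction on the first (and symmetrically the last) set of vertices enters, since Lemma~\ref{lemma_2} is only available inside the bulk.

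For the bulk I would insert the estimate of Lemma~\ref{lemma_2}, namely $\mathbb{P}\mathrm{r}[d_{k+1} = d+1] \approx (1+o(1))\sqrt{k/n}\,(1 - \sqrt{k/n})^{d} + \mathcal{O}(n^{-\sfrac{7}{4}})$, into each term (after the harmless reindexing $k \mapsto k+1$). Summing the $\mathcal{O}(n^{-\sfrac{7}{4}})$ remainder over the $\mathcal{O}(n)$ bulk indices yields the $\mathcal{O}(n^{-\sfrac{3}{4}})$ term, and the leading part is $(1+o(1))\sum_{k=M}^{n-M}\sqrt{k/n}\,(1 - \sqrt{k/n})^{d}$. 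Under the substitution $x = k/n$ with step $1/n$, this is a Riemann sum for $n\int_{0}^{1}\sqrt{x}\,(1 - \sqrt{x})^{d}\,dx$.

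The integral evaluates in closed form: setting $u = \sqrt{x}$ gives $n\int_{0}^{1}\sqrt{x}(1-\sqrt{x})^{d}dx = 2n\int_{0}^{1}u^{2}(1-u)^{d}du = 2n\,B(3, d+1) = \dfrac{4n}{(d+1)(d+2)(d+3)}$, where $B(\cdot,\cdot)$ is the Beta function, which establishes the first displayed formula. For the second statement I would use $(d+1)(d+2)(d+3) = (1+o(1))d^{3}$, so the main term is $\Theta(n/d^{3})$; since $d \le n^{\alpha}$ forces this term to be at least of order $n^{1-3\alpha}$, the hypothesis $1 - 3\alpha > \beta$ renders $\mathcal{O}(M) = \mathcal{O}(n^{\beta}/\log n)$ negligible against it, while $\mathcal{O}(n^{-\sfrac{3}{4}})$ is dominated trivially; hence $\mathbb{E}(N(d+1)) \approx n/d^{3}$.

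The main obstacle is the Riemann-sum-to-integral passage, which must be controlled \emph{relative} to the leading order $\Theta(n/d^{3})$ rather than merely in absolute terms. The integrand $\sqrt{x}(1-\sqrt{x})^{d}$ concentrates in a narrow region of width $\sim 1/d^{2}$ about $x \sim 1/d^{2}$ near the origin, so I would verify two points. First, the lower cutoff at $x = M/n$ must lie below this peak, which holds because $1 - 3\alpha > \beta$ forces $M/n = o(d^{-2})$, so the discarded tail $n\int_{0}^{M/n}\sqrt{x}(1-\sqrt{x})^{d}dx = \mathcal{O}\!\left(n(M/n)^{\sfrac{3}{2}}\right)$ is $o(n/d^{3})$. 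Second, the number of grid points inside the peak, of order $n/d^{2} \ge n^{1-2\alpha} \to \infty$ since $\alpha \le \tfrac{1}{3}$, must be large enough to make the discretization error $o$ of the main term. Finally I would confirm that the $(1+o(1))$ and $\mathcal{O}(\sqrt{n\log n})$ slack inherited from Lemma~\ref{lemma_2}, valid once its hypotheses $2\beta - \alpha > \tfrac{3}{2}$ and $\beta - \alpha > \tfrac{1}{2}$ are in force, does not perturb the leading constant.
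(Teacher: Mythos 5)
Your proposal is correct and follows essentially the same route as the paper's proof: the same indicator decomposition into the two $\mathcal{O}(M)$ end regimes and the bulk $M \le k \le n-M$, the same insertion of Lemma~\ref{lemma_2} with the summed $\mathcal{O}(n^{-\sfrac{7}{4}})$ remainder giving $\mathcal{O}(n^{-\sfrac{3}{4}})$, and the same Riemann-sum passage to $n\int_{0}^{1}\sqrt{u}\,(1-\sqrt{u})^{d}\,du = \frac{4n}{(d+1)(d+2)(d+3)}$ (the paper computes the antiderivative directly where you invoke $2n\,B(3,d+1)$, a cosmetic difference). Your closing verification that the discretization error and the tails near $x = M/n$ are negligible relative to $\Theta(n/d^{3})$ is a point the paper passes over silently, so it strengthens rather than diverges from the argument.
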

\begin{proof}
Denote by $d_i$ the total degree of node $v_i$ and consider random variable  $X_i$ defined as follows:
\begin{equation*}
X_i = \begin{cases} 1 &\mbox{if } d_i = d + 1 \\ 
0 & \mbox{otherwise } \end{cases} 
\end{equation*}
Therefore, 
\begin{equation*}
\mathbb{E}(N(d + 1)) = \sum_{k=1}^{M-1}\mathbb{E}(X_i) + \sum_{k=M}^{n - M}\mathbb{E}(X_i) + \sum_{k = n-M+1}^{n}\mathbb{E}(X_i) \enspace .
\end{equation*}
Further, notice that:
\begin{equation}\label{sum_21}
\sum_{k=1}^{M-1}\mathbb{E}(X_i) + \sum_{k = n-M+1}^{n}\mathbb{E}(X_i)= \mathcal{O}(M) \enspace .
\end{equation}
Using Lemma~\ref{lemma_2}, for large $n$:
\begin{align}\label{sum_22}
\sum_{k=M}^{n - M}\mathbb{E}(X_i) &= \sum_{k=M}^{n - M}\mathbb{P}\mathrm{r}\left[d_{k+1} = d + 1 \right]\\
&\approx\mathcal{O}(n^{-\sfrac{3}{4}}) + (1 + o(1))\sum_{k = M}^{n-M}\sqrt{\frac{k}{n}}\left(1 - \sqrt{\frac{k}{n}}\right)^d \\\nonumber
&=\mathcal{O}(n^{-\sfrac{3}{4}}) + (1 + o(1))n\sum_{k = M}^{n-M}\sqrt{\frac{k}{n}}\left(1 - \sqrt{\frac{k}{n}}\right)^d\frac{1}{n}  \\\nonumber 
&= \mathcal{O}(n^{-\sfrac{3}{4}}) + (1 + o(1))\int_{0}^{1}\sqrt{u}(1 - \sqrt{u})^d du \\\nonumber
&=\mathcal{O}(n^{-\sfrac{3}{4}}) + 2(1 + o(1))\left(\frac{u^{d+1}}{d+1} - 2\frac{u^{d+2}}{d+2} + \frac{u^{d+3}}{d+3}\right)\Bigg|_{0}^{1} \\ \nonumber
&= \mathcal{O}(n^{-\sfrac{3}{4}}) + (1 + o(1))\frac{4n}{(d+1)(d+2)(d+3)} \enspace .
\end{align}
Combining Equations~\ref{sum_21} and \ref{sum_22}:
\begin{equation*}
\mathbb{E}(N(d + 1)) \approx \mathcal{O}(M) + \mathcal{O}(n^{-\sfrac{3}{4}}) + (1 + o(1))\frac{4n}{(d+1)(d+2)(d+3)} \enspace .
\end{equation*}
Notice that if $1-3\alpha > \beta$ then the term $\frac{4n}{(d+1)(d+2)(d+3)}$ dominates all other terms in Equation~\ref{exp_approx}, hence for such $(\alpha,\beta)$:
\begin{equation}
\mathbb{E}(N(d + 1))\approx \frac{n}{d^3} \enspace .
\end{equation}
\end{proof}

The next lemma establishes a similar result for graph $\mathbb{G}^{(n)}_{m}$:

\begin{lemma}
Let $N'(d + 1)$ be the number of nodes in $\mathbb{G}^{(n)}_{m}$ that has total degree $d + 1$, where $d\le n^{\alpha}$, with $\alpha\in[0, \frac{1}{3}]$. Then for large $n$:
\begin{equation*}
\mathbb{E}(N'(d + 1)) \approx \mathcal{O}(M) + \mathcal{O}(n^{-\sfrac{3}{4}}) + (1 + o(1))\frac{2m(m + 1)n}{(d+m)(d+ 1 + m)(d + m + 2)} \enspace ,
\end{equation*}
and for $(\alpha,\beta)$ such that $1 - 3\alpha > \beta$:
$\mathbb{E}(N'(d + 1))\approx \frac{n}{d^3}$.
\end{lemma}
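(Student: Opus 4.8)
The plan is to reduce everything to the one--connection process through the construction of Section~\ref{Sec:M2}: $\mathbb{G}^{(n)}_m$ is obtained from $\mathbb{G}^{(mn)}_1$ by identifying the vertices $v'_{(i-1)m+1},\dots,v'_{im}$ into a single vertex $v_i$. Reading this off the linearized chord diagram, the total degree of $v_{k+1}\in\mathbb{G}^{(n)}_m$ is exactly the number of LCD--points lying between the $(mk)$-th and the $(mk+m)$-th right end--points of the underlying $(mn)$-pairing; that is, $v_{k+1}$ is the block terminated by the next $m$ right end--points after position $D_{mk}$. I would therefore rerun Lemmas~\ref{lemma_1}--\ref{lemma_3} with $n\mapsto mn$ and $k\mapsto mk$. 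Applying Lemma~\ref{lemma_1} at time $mk$ out of $mn$ shows that $D_{mk}$ concentrates around $2\sqrt{(mk)(mn)}=2m\sqrt{kn}$ within $\mathcal{O}(\sqrt{n\log n})$, with failure probability $\mathcal{O}((mn)^{-\sfrac{7}{4}})=\mathcal{O}(n^{-\sfrac{7}{4}})$, and the local ``success'' parameter is $p=\sqrt{(mk)/(mn)}=\sqrt{k/n}$, identical to the $m=1$ case.

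The one genuinely new ingredient is the block analog of Equation~\ref{cond_prob_expression}. Rather than waiting for a single right end--point, I would count the $n$-pairings in which the window $\{D_{mk}+1,\dots,D_{mk}+d+m\}$ contains exactly $m$ right end--points with the last at position $D_{mk}+d+m$. The same double count of $C_{\text{left}}$/$C_{\text{right}}$ pairings used for Equation~\ref{cond_prob_expression} carries over; the only structural change is that the $d$ incoming points must now be distributed among $m$ successive right end--points, which supplies the multiplicity $\binom{d+m-1}{m-1}$. Conditioning on $\bm{D}_\alpha=\{|D_{mk}-2m\sqrt{kn}|\le 4\sqrt{n\log n}\}$ and repeating the Taylor/ratio estimates of Claims~\ref{first_cond}--\ref{third_cond} (whose quantities merely scale by the constant $m$, leaving the limits unchanged) should give the generalization of Lemma~\ref{lemma_2}, where $d$ denotes the in--degree and the total degree is $d+m$:
\begin{equation*}
\mathbb{P}\mathrm{r}\left[d_{k+1}=d+m\right]\approx(1+o(1))\binom{d+m-1}{m-1}\left(\frac{k}{n}\right)^{\sfrac{m}{2}}\left(1-\sqrt{\frac{k}{n}}\right)^{d}+\mathcal{O}(n^{-\sfrac{7}{4}}) \enspace .
\end{equation*}
This is a negative binomial that collapses to the geometric law of Lemma~\ref{lemma_2} at $m=1$; the error is absorbed exactly as in Equations~\ref{sum_11}--\ref{sum_12} through the law of total probability over $\bm{D}_\alpha$ and its complement.

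With this per--vertex estimate I would imitate the summation of Lemma~\ref{lemma_3}. Splitting $\sum_{k=1}^{n}$ into $k<M$, $M\le k\le n-M$, and $k>n-M$, the two boundary ranges contribute $\mathcal{O}(M)$, the accumulated concentration error contributes $n\cdot\mathcal{O}(n^{-\sfrac{7}{4}})=\mathcal{O}(n^{-\sfrac{3}{4}})$, and the bulk sum becomes a Riemann sum converging to a Beta integral:
\begin{equation*}
n\binom{d+m-1}{m-1}\int_{0}^{1}u^{\sfrac{m}{2}}\left(1-\sqrt{u}\right)^{d}du=2n\binom{d+m-1}{m-1}\int_{0}^{1}p^{m+1}(1-p)^{d}dp=\frac{2m(m+1)\,n}{(d+m)(d+m+1)(d+m+2)} \enspace ,
\end{equation*}
using $u=p^{2}$ and $\int_{0}^{1}p^{m+1}(1-p)^{d}dp=\sfrac{(m+1)!\,d!}{(d+m+2)!}$. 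Collecting the three contributions yields the displayed expression for $\mathbb{E}(N'(d+1))$; finally, when $1-3\alpha>\beta$ the cubic term of order $n/d^{3}$ dominates both $\mathcal{O}(M)=\mathcal{O}(n^{\beta}/\log n)$ and $\mathcal{O}(n^{-\sfrac{3}{4}})$, so $\mathbb{E}(N'(d+1))\approx n/d^{3}$.

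The step I expect to be the main obstacle is the block--level count, namely establishing the multiplicity $\binom{d+m-1}{m-1}$ together with an error bound that is uniform in both $k$ and $d$. Two points require care. First, across the $m$ consecutive right end--points the position advances from $mk$ to $mk+m$, so one must check that the density parameter stays $\sqrt{k/n}(1+o(1))$ over the whole window instead of drifting; this is where $d\le n^{\sfrac{1}{3}}$ and the fixedness of $m$ are used. Second, the $m$ identified sub--vertices are not independent, so the clean negative--binomial form holds only asymptotically; I would justify it either directly through the LCD double count above, or by an induction on $m$ that treats each additional identified sub--vertex as one further geometric waiting time with the same parameter $p=\sqrt{k/n}$ up to $o(1)$ corrections and then convolves the $m$ contributions.
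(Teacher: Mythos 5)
Your proposal is correct and follows essentially the same route as the paper: reduction to $\mathbb{G}^{(mn)}_1$ via the vertex identification, the per-position geometric law with parameter $\sqrt{k/n}$ (so that the $m$ identified sub-vertices convolve into a negative binomial with multiplicity $\binom{d+m-1}{m-1}$, which the paper writes as the composition sum $\sum_{a_1+\cdots+a_m=d}\prod_i\sqrt{k/n}\,(1-\sqrt{k/n})^{a_i}$), followed by the same Riemann-sum reduction to the Beta integral $\int_0^1 (\sqrt{u})^m(1-\sqrt{u})^d\,du = 2(m+1)!\,d!/(d+m+2)!$ and the same bookkeeping of the $\mathcal{O}(M)$ boundary and $\mathcal{O}(n^{-\sfrac{3}{4}})$ concentration terms, with $n/d^3$ dominating when $1-3\alpha>\beta$. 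Your explicit handling of the degree convention (total degree $d+m$ versus in-degree $d$) and of the asymptotic-only validity of the negative-binomial form is if anything slightly more careful than the paper's, but it is the same argument.
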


\begin{proof}
Using the equivalence of graphs $\mathbb{G}^{(n)}_{m}$ and $\mathbb{G}^{(nm)}_{1}$, and applying the same arguments as Lemmas~\ref{lemma_1} and~\ref{lemma_2}, for node $v_{mk + j + 1}$ of graph $\mathbb{G}^{(nm)}_{1}$:
\begin{align*}
&\mathbb{P}\mathrm{r}\left[ d_{mk + j + 1} = d + 1|d_1, d_2, \ldots d_{mk + j}\right]\approx \sqrt{\frac{(mk + j)}{mn}}\left(1 - \sqrt{\frac{(mk + j)}{mn}}\right)^d \approx\sqrt{\frac{k}{n}}\left(1 - \sqrt{\frac{k}{n}}\right)^d 
\end{align*}
Therefore, for node $v'_k$ in $\mathbb{G}^{(n)}_{m}$:
\begin{align*}
\mathbb{P}\mathrm{r}\left[ d'_{k} = d + 1|d_1, d_2, \ldots d_{mk + j}\right] &= \mathcal{O}(n^{-\sfrac{7}{4}}) + (1 + o(1))\sum_{a_1 + a_2 +\cdot + a_m = d}\prod_{i=1}^{m}\sqrt{\frac{k}{n}}\left(1 - \sqrt{\frac{k}{n}}\right)^{a_i} \\\nonumber
&\approx\mathcal{O}(n^{-\sfrac{7}{4}}) + (1 + o(1))\left(\begin{array}{c}
d + m - 1 \\
m - 1
\end{array}\right)\left(\frac{k}{n}\right)^{\sfrac{m}{2}}\left(1 - \sqrt{\frac{k}{n}}\right)^{d}
\end{align*}
By applying the same technique as in Lemma~\ref{lemma_3} and using $\int_{0}^{1}\sqrt{u}^{\sfrac{m}{2}}(1 - \sqrt{u})^d du = 2\frac{(m+1)!d!}{(d + m + 2)!}$:
\begin{align}\label{exp_approx_2}
\mathbb{E}(N'(d + 1)) &\approx \mathcal{O}(M) + \mathcal{O}(n^{-\sfrac{3}{4}}) + (1+ o(1))n\left(\begin{array}{c}
d + m - 1 \\
m - 1
\end{array}\right)\int_{0}^{1}\sqrt{u}^{\sfrac{m}{2}}(1 - \sqrt{u})^d du  \\\nonumber
&\approx\mathcal{O}(M) + \mathcal{O}(n^{-\sfrac{3}{4}}) + (1+ o(1))\frac{2nm(m+1)}{(d+m)(d+m+1)(d+m+2)}
\end{align}
If $1-3\alpha > \beta$ then $\frac{2nm(m+1)}{(d+m)(d+m+1)(d+m+2)}$ dominates all other term in (\ref{exp_approx_2}), hence for such $(\alpha, \beta)$:
\begin{equation}
\mathbb{E}(N'(d + 1))\approx \frac{n}{d^3} 
\end{equation}
\end{proof}

The following lemma introduces the martingale sequence that will be the final step in the proof:
\begin{lemma}\label{lemma_4}
Let $\mathcal{F}^{t}_{m}$ be the smallest sigma field generated by random graphs $\{\mathbb{G}^{(1)}_{m}, \mathbb{G}^{(2)}_{m},\ldots, \ldots, \mathbb{G}^{(t)}_{m} \}$ and let $Y_t = \mathbb{E}(N'(d+1)|\mathcal{F}^{t}_{m})$ for $1\le t\le n$ Then,
\begin{enumerate}
\item $Y_t$ is a martingale sequence with respect to filtration $\{\mathcal{F}^{t}_{m}\}^{n}_{t=1}$
\item For $1\le t\le n$
\begin{equation}\label{mart_diff}
|Y_t - Y_{t-1}|\le 2 \enspace .
\end{equation}
\end{enumerate}
\end{lemma}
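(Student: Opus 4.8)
The plan is to handle the two assertions separately: the martingale property follows from the tower rule and is essentially bookkeeping, while the genuine content is the bounded-difference estimate, which I would obtain through a coupling argument.

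For part (1), note that $N'(d+1)$ is a single, bounded random variable (with $0 \le N'(d+1) \le n$) that is measurable with respect to $\mathcal{F}^n_m$, and that the filtration is increasing, $\mathcal{F}^{t-1}_m \subseteq \mathcal{F}^t_m$. Hence $Y_t = \mathbb{E}[N'(d+1)\mid\mathcal{F}^t_m]$ is the Doob martingale of $N'(d+1)$. I would verify the defining properties in turn: $Y_t$ is $\mathcal{F}^t_m$-measurable by construction; $\mathbb{E}|Y_t| \le \mathbb{E}[N'(d+1)] \le n < \infty$ gives integrability; and the tower property gives $\mathbb{E}[Y_t\mid\mathcal{F}^{t-1}_m] = \mathbb{E}[\mathbb{E}[N'(d+1)\mid\mathcal{F}^t_m]\mid\mathcal{F}^{t-1}_m] = \mathbb{E}[N'(d+1)\mid\mathcal{F}^{t-1}_m] = Y_{t-1}$, which is the martingale identity.

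For part (2), I would first reduce the difference to a one-step Lipschitz estimate. Fixing a realization of $\mathbb{G}^{(t-1)}_m$, the extra information in $\mathcal{F}^t_m$ is only the attachment choice of the single vertex $v_t$; applying the law of total expectation over the possible edge-sets $w$ of $v_t$ yields $Y_t - Y_{t-1} = \sum_w p_w\big(\mathbb{E}[N'(d+1)\mid \mathbb{G}^{(t-1)}_m, v_t=w_0] - \mathbb{E}[N'(d+1)\mid \mathbb{G}^{(t-1)}_m, v_t=w]\big)$, where $w_0$ is the revealed choice, so that $|Y_t - Y_{t-1}| \le \max_{w,w'}\big|\mathbb{E}[N'(d+1)\mid \cdot,v_t=w] - \mathbb{E}[N'(d+1)\mid \cdot,v_t=w']\big|$. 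It then suffices to bound this gap by $2$ for any two continuations that share the history through $v_{t-1}$ but attach $v_t$ differently (for $m>1$, I would run this argument on the equivalent one-connection process $\mathbb{G}^{(nm)}_1$ via the identification of Section~\ref{Sec:M2}).

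I would bound that gap by coupling the two continuation processes on $\{v_{t+1},\dots,v_n\}$ so that they are driven by common attachment randomness. Initially the two graphs differ only in the endpoints of $v_t$'s edges, i.e.\ in the degrees of at most two vertices, and at each subsequent step the preferential-attachment choice is coupled maximally, coinciding in both copies except on an event of probability proportional to the current degree discrepancy over the total degree. The target is to show the final degree sequences of the two copies differ at no more than two vertices, whence each such vertex shifts the count $N'(d+1)$ by at most $1$ and the two values differ by at most $2$, giving $|Y_t - Y_{t-1}| \le 2$ and, ultimately, the increments needed to invoke Azuma--Hoeffding. The main obstacle is controlling the cascade: a single maximal-coupling divergence can momentarily amplify a unit discrepancy rather than leave it confined to two vertices, so the naive invariant is not preserved verbatim. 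I would therefore track the discrepancy as an auxiliary stochastic process and argue, using that the per-step divergence probability decays like $1/t$, that it remains confined (a supermartingale or stochastic-dominance argument), which is the delicate step on which the clean constant $2$ rests.
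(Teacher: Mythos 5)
Your Part 1 is correct and is the same argument as the paper's: $Y_t$ is the Doob martingale of the bounded variable $N'(d+1)$ (so $|Y_t|\le n$ gives integrability), and adaptedness plus the tower property give the martingale identity. Your reduction of Part 2 to bounding $\max_{w,w'}\left|\mathbb{E}[N'(d+1)\mid\cdot,v_t=w]-\mathbb{E}[N'(d+1)\mid\cdot,v_t=w']\right|$ is also sound, and note that you need even less than you aim for: for \emph{any} coupling of the two continuations, $\left|\mathbb{E}[N'\mid w]-\mathbb{E}[N'\mid w']\right|\le\mathbb{E}\left|N'_1-N'_2\right|$, so a bound on the \emph{expected} coupled difference suffices; almost-sure confinement is not required.

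The genuine gap is the choice of coupling, and you have diagnosed it yourself without repairing it. Under stepwise maximal coupling the two copies diverge at step $s>t$ with probability of order $1/s$ (degree discrepancy over total degree $\approx 2ms$), so even if the discrepancy never grew, the expected number of divergence events between $t$ and $n$ is of order $\log(n/t)$; moreover each divergence typically attaches the new edge to \emph{different} vertices in the two copies, creating fresh discrepant vertices and inflating the subsequent divergence rate, so the coupled difference $|N'_1-N'_2|$ is not $O(1)$ in expectation. No supermartingale or stochastic-dominance patch can rescue the constant $2$ from this coupling --- the coupling itself must change --- and the constant matters: with martingale differences of order $\log n$ the exponent in the Azuma step of Lemma~\ref{lemma_5}, taken at deviation $\sqrt{n\log n}$, tends to $0$ and the bound becomes vacuous. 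The repair is the re-routing (lineage) coupling, which is what the paper's one-line remark --- that the attachment made at moment $t$ ``does not affect the joint degree distribution of other nodes'' --- implicitly invokes: drive both copies by the same uniform choices of edge endpoints, and whenever a later edge selects the disputed endpoint created at step $t$, or any endpoint descended from it, attach it to $v_i$ in copy $1$ and to $v_j$ in copy $2$. Inductively every discrepant endpoint sits at $v_i$ in copy $1$ and at $v_j$ in copy $2$, all other vertices have identical degrees at all times, so the terminal degree sequences differ only at $v_i$ and $v_j$ and $|N'_1-N'_2|\le 2$ holds surely --- there is no cascade to control. Equivalently, in the $n$-pairing model of Section~\ref{Sec:Pairing}, the two conditional laws are carried onto one another by a measure-preserving switch of two points of the pairing, which changes the degrees of at most two vertices of $\bm{\Psi}(\mathcal{P})$ deterministically. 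Finally, your instinct to pass to $\mathbb{G}^{(nm)}_1$ for $m>1$ is not optional but necessary for the constant $2$: one step of $\mathcal{F}^{t}_{m}$ reveals all $m$ edges of $v_t$, and the lineage coupling then yields only $|Y_t-Y_{t-1}|\le 2m$ (still adequate for Azuma--Hoeffding at fixed $m$, but not the stated bound; the paper is silent on this point).
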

\begin{proof}
Since $Y_t$ are adapted to $\{\mathcal{F}^{t}_{m}\}^{n}_{t=1}$ and $\mathbb{E}(Y_{t+1}|{F}^{t}_{m}) = \mathbb{E}(\mathbb{E}(N'(d+1)|\mathcal{F}^{t+1}_m)|\mathcal{F}^{t}_{m}) = \mathbb{E}(N'(d+1)|\mathcal{F}^{t}_{m}) = Y_t$ it follows that $\{Y_t\}$ is martingale sequence.\\
Now notice that attachment that is made at moment $t$ does not affect the joint degree distribution of other nodes $v_i$, therefore, the change in $N'(d+1)$ is at most $2$.
\end{proof}
The next Lemma, presented without a proof, is the Azuma-Hoeffding concentration inequality for martingales

\begin{lemma}[Azuma-Hoeffding Inequality]\label{lemma_azuma}
Let $\{X_n\}$ be a martingale (or super-martingale) such that $|X_{k+1} - X_k| \le c_k$ almost surely for $k = 0,1,\ldots n-1$. Then for any $N\in \mathbb{N}$ such that $ N \le n$ and $t\in\mathbb{R}$:
\begin{equation}
\mathbb{P}\mathrm{r}\left[ |X_N - X_0| \ge t \right] \le exp\left(-\frac{t^2}{2\sum_{k = 1}^{N}c^2_k}\right)
\end{equation}

\end{lemma}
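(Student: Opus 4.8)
The plan is to establish this classical bound by the exponential-moment (Chernoff) method, so that the work reduces to controlling the conditional moment generating function of the increments. First I would pass to the martingale-difference sequence $D_k = X_k - X_{k-1}$ with the natural filtration $\mathcal{F}_k = \sigma(X_0,\dots,X_k)$, so that by hypothesis $|D_k|\le c_k$ and $\mathbb{E}(D_k\mid\mathcal{F}_{k-1})\le 0$, with equality in the martingale case. Fixing $\lambda>0$ and applying Markov's inequality to the nonnegative variable $e^{\lambda(X_N-X_0)}$ gives
\begin{equation*}
\mathbb{P}\mathrm{r}\left[X_N - X_0 \ge t\right] \le e^{-\lambda t}\,\mathbb{E}\!\left(e^{\lambda(X_N - X_0)}\right),
\end{equation*}
so the entire problem reduces to bounding the expectation on the right.

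The crux is a conditional form of Hoeffding's lemma: whenever $|D_k|\le c_k$ and $\mathbb{E}(D_k\mid\mathcal{F}_{k-1})\le 0$, I claim $\mathbb{E}(e^{\lambda D_k}\mid\mathcal{F}_{k-1})\le e^{\lambda^2 c_k^2/2}$ for every $\lambda>0$. I would prove this from the convexity of $x\mapsto e^{\lambda x}$ on $[-c_k,c_k]$: bounding the exponential by the chord through its endpoints and taking conditional expectation yields $\cosh(\lambda c_k)$ plus a term proportional to $\mathbb{E}(D_k\mid\mathcal{F}_{k-1})$, which is non-positive for $\lambda>0$; the elementary inequality $\cosh x\le e^{x^2/2}$ then gives the claim (and shows the super-martingale case needs no separate treatment). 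With this estimate I peel off one increment at a time via the tower property,
\begin{equation*}
\mathbb{E}\!\left(e^{\lambda(X_N - X_0)}\right) = \mathbb{E}\!\left(e^{\lambda(X_{N-1}-X_0)}\,\mathbb{E}\!\left(e^{\lambda D_N}\mid \mathcal{F}_{N-1}\right)\right) \le e^{\lambda^2 c_N^2/2}\,\mathbb{E}\!\left(e^{\lambda(X_{N-1}-X_0)}\right),
\end{equation*}
and iterating down to $X_0$ produces $\mathbb{E}(e^{\lambda(X_N-X_0)})\le \exp\!\big(\tfrac{\lambda^2}{2}\sum_{k=1}^N c_k^2\big)$.

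Substituting this back gives $\mathbb{P}\mathrm{r}[X_N-X_0\ge t]\le \exp(-\lambda t + \tfrac{\lambda^2}{2}\sum_{k=1}^N c_k^2)$, and I would then optimize over the free parameter by taking $\lambda = t/\sum_{k=1}^N c_k^2$, which yields the one-sided bound $\exp(-t^2/(2\sum_{k=1}^N c_k^2))$. In the martingale case the sequence $-X_k$ is again a martingale with the same increment bounds, so running the argument on it controls the lower tail and a union bound produces the two-sided statement; since the lemma is applied in this paper only to the genuine martingale $Y_t$ of Lemma~\ref{lemma_4}, this is exactly the form needed. I expect the single genuine obstacle to be the conditional Hoeffding estimate---getting the constant $c_k^2/2$ in the exponent correct and verifying that the mean-non-positivity (rather than mean-zero) hypothesis still suffices; the Chernoff reduction, the tower-property induction, and the scalar optimization over $\lambda$ are all routine. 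I would also flag that the strict two-sided bound carries an extra factor $2$ from the union bound, which is harmlessly absorbed into the constant (or avoided entirely by invoking only the relevant one-sided tail).
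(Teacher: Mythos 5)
The paper does not actually prove this lemma: it is stated ``without a proof'' and the one-line proof environment simply refers the reader to Azuma's 1967 paper. Your proposal therefore cannot coincide with or diverge from an in-paper argument; what it does is supply the standard exponential-moment proof, and it supplies it correctly. The Chernoff reduction via Markov's inequality, the conditional Hoeffding estimate $\mathbb{E}(e^{\lambda D_k}\mid\mathcal{F}_{k-1})\le e^{\lambda^2 c_k^2/2}$ obtained from the chord bound on the convex exponential together with $\cosh x\le e^{x^2/2}$, the tower-property induction, and the optimization $\lambda = t/\sum_{k=1}^N c_k^2$ are exactly the classical argument in the cited reference, so you have in effect reconstructed the proof the paper outsources.

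Two of your side remarks are worth emphasizing because they expose small inaccuracies in the lemma \emph{as stated} in the paper, not in your argument. First, the genuinely two-sided bound on $|X_N-X_0|$ requires the martingale hypothesis (a super-martingale controls only the upper tail, since $-X_k$ need not be a super-martingale) and carries the union-bound factor, so the correct right-hand side is $2\exp\bigl(-t^2/(2\sum_{k=1}^{N}c_k^2)\bigr)$; the paper's statement omits the factor $2$, which is harmless where it is used (Lemma~\ref{lemma_5} applies it to the genuine martingale $Y_t$ of Lemma~\ref{lemma_4}, and the constant is absorbed into the $\mathcal{O}(n^{-\sfrac{1}{8}})$ estimate), but your proof correctly identifies that either the factor $2$ or a restriction to the one-sided tail is needed. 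Second, your observation that mean-non-positivity suffices in the conditional Hoeffding step is the right way to see why the super-martingale case needs no separate treatment for the upper tail. There is also a trivial indexing mismatch in the paper's statement (increments indexed $k=0,\dots,n-1$ but squares summed from $k=1$ to $N$) that your renumbering $D_k = X_k - X_{k-1}$ silently and correctly repairs. In short: your proof is complete and standard, and it is more careful than the statement it proves.
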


\begin{proof}
The proof can be found in~\cite{Azuma}.
\end{proof}

The next Lemma proves the Theorem~\ref{Theo:Them}:
\begin{lemma}\label{lemma_5}
For all $(\alpha, \beta)$, such that $1 - 3\alpha > \frac{1}{2}$ and
\begin{equation}\label{req_2}
\begin{cases} 2\beta - \alpha > \frac{3}{2}  \\ \beta - \alpha > \frac{1}{2} \\ 1 - 3\alpha \ge \beta \\ 0 \le \beta \le 1 \\ 0\le \alpha \le \frac{1}{3}\\
\end{cases}
\end{equation}
the following is true:
\begin{equation*}
\mathbb{P}\mathrm{r}\left[\frac{N(n,m,d)}{n}\sim\frac{1}{d^{3}}\right]\rightarrow 1, \ \text{as $n\rightarrow \infty$} \enspace .
\end{equation*} 
\end{lemma}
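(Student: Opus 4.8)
The plan is to convert the expectation estimate supplied by the preceding lemmas into a high-probability statement by invoking the martingale machinery assembled in Lemmas~\ref{lemma_4} and~\ref{lemma_azuma}. Under the constraints in~\eqref{req_2} those earlier lemmas already give $\mathbb{E}(N'(d+1)) = (1 + o(1))\frac{n}{d^3}$, so all that remains is to show that $N'(d+1)$ concentrates around its mean tightly enough that $\frac{N'(d+1)}{n}$ is asymptotic to $\frac{1}{d^3}$ with probability tending to $1$. Since $N'(d+1)$ and $N(n,m,d)$ count vertices at the relevant degree, concentration of $N'(d+1)$ suffices.

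First I would identify the endpoints of the Doob martingale $\{Y_t\}$ of Lemma~\ref{lemma_4}: because $\mathbb{G}^{(1)}_m$ is deterministic we have $Y_1 = \mathbb{E}(N'(d+1))$, while the full graph is measurable with respect to $\mathcal{F}^n_m$, so $Y_n = N'(d+1)$, whence $|Y_n - Y_1| = |N'(d+1) - \mathbb{E}(N'(d+1))|$. Applying the Azuma--Hoeffding inequality (Lemma~\ref{lemma_azuma}) with the bounded-difference constant $c_k = 2$ from~\eqref{mart_diff}, so that $\sum_{k=1}^{n} c_k^2 = 4n$, yields
\begin{equation*}
\mathbb{P}\mathrm{r}\left[\, |N'(d+1) - \mathbb{E}(N'(d+1))| \ge t \,\right] \le \exp\!\left(-\frac{t^2}{8n}\right) \enspace .
\end{equation*}

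Next I would calibrate the deviation $t$ against the mean. Fixing an arbitrary $\delta > 0$ and setting $t = \delta\,\frac{n}{d^3}$, the bound $d \le n^{\alpha}$ gives $\frac{n}{d^3} \ge n^{1 - 3\alpha}$, so the exponent satisfies $\frac{t^2}{8n} = \frac{\delta^2 n}{8 d^6} \ge \frac{\delta^2}{8}\, n^{1 - 6\alpha}$. The hypothesis $1 - 3\alpha > \tfrac{1}{2}$ is exactly equivalent to $1 - 6\alpha > 0$, which forces this exponent to diverge, so the right-hand side tends to $0$ for every fixed $\delta$. Hence $|N'(d+1) - \mathbb{E}(N'(d+1))| < \delta\,\frac{n}{d^3}$ with probability $\to 1$; combining this with $\mathbb{E}(N'(d+1)) = (1+o(1))\frac{n}{d^3}$ and letting $\delta$ be arbitrary yields $\frac{N(n,m,d)}{n} \sim \frac{1}{d^3}$ with probability tending to $1$, as claimed.

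The main obstacle is precisely this calibration step: Azuma--Hoeffding only controls fluctuations on the scale $\sqrt{n}$, whereas the signal we must resolve is the mean $\frac{n}{d^3}$. The argument succeeds exactly when $\frac{n}{d^3} \gg \sqrt{n}$, i.e.\ when $1 - 3\alpha > \tfrac{1}{2}$, which is why that condition appears separately in the hypothesis and is the binding constraint. The remaining inequalities in~\eqref{req_2} play no role here; they are inherited from the earlier lemmas solely to guarantee that the expectation really is $\sim \frac{n}{d^3}$, and intersecting all of them is what ultimately pins the admissible exponent down to $\alpha < \tfrac{1}{14}$, giving the range $d \le n^{\sfrac{1}{14}-\epsilon}$ of Theorem~\ref{Theo:Them}.
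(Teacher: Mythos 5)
Your proposal is correct and follows essentially the same route as the paper: both apply the Azuma--Hoeffding inequality to the Doob martingale of Lemma~\ref{lemma_4} with $\sum_k c_k^2 = 4n$, use the earlier lemmas (under the constraints in~\eqref{req_2}) to get $\mathbb{E}(N(n,m,d)) \approx n/d^3$, and observe that $1-3\alpha > \tfrac{1}{2}$ is exactly what makes the mean dominate the $\sqrt{n}$-scale fluctuations. The only difference is cosmetic calibration: the paper fixes the deviation threshold at $\sqrt{n\log n}$ (failure probability $\mathcal{O}(n^{-\sfrac{1}{8}})$) whereas you take $t = \delta n/d^3$ (failure probability $\exp(-c\,n^{1-6\alpha})$), and both choices close the argument under the same binding constraint.
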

\begin{proof}
Notice that $Y_0 = \mathbb{E}(N(n,m,d)|{F}^{0}_{m}) = \mathbb{E}(N(n,m,d))$.  Therefore, using Lemmas~\ref{lemma_4} and \ref{lemma_azuma}:
\begin{align*}
\mathbb{P}\mathrm{r}\left[|(N(n,m,d) - \mathbb{E}(N(n,m,d))| \ge \sqrt{n\log n}\right] \le \exp\left(-\frac{n\log n}{8n}\right) &= \exp\left(-\frac{\log n}{8}\right)  \\
&= \mathcal{O}(n^{-\sfrac{1}{8}}) 
\end{align*}
Consequently,
\begin{align*}
& \mathbb{P}\mathrm{r}\left[|(N(n,m,d) - \mathbb{E}(N(n,m,d))| \le \sqrt{n\log n}\right] \approx 1 - \mathcal{O}(n^{-\sfrac{1}{8}}) 
\end{align*}  
Notice that $\mathbb{E}(N(n,m,d))\approx \frac{n}{d^3}$  for all $(\alpha, \beta)$ satisfying the system of inequalities in Equation~\ref{req_2}. For such $(\alpha, \beta)$, the following result is true:
\begin{align*}
|(N(n,m,d) - \mathbb{E}(N(n,m,d))| &\le \sqrt{n\log n}\Rightarrow \mathbb{E}(N(n,m,d)) - \sqrt{n\log n} \le N(n,m,d)  \\
& \le \mathbb{E}(N(n,m,d)) + \sqrt{n\log n}\\
&\implies \frac{n}{d^3} - \sqrt{n\log n} \le N(n,m,d) \le \frac{n}{d^3} + \sqrt{n\log n}\\\nonumber
&\Rightarrow\frac{1}{d^3} - \sqrt{\frac{\log n}{n}} \le \frac{N(n,m,d)}{n} \le \frac{1}{d^3} + \sqrt{\frac{\log n}{n}} 
\end{align*}
Please note that if $\alpha$ such that $1 - 3\alpha > \frac{1}{2}$, $\frac{1}{d^3}$ asymptotically dominates $\sqrt{\frac{\log n}{n}}$. Thus, if $(\alpha, \beta)$ such that $1 - 3\alpha >\frac{1}{2}$ and satisfying the system in Equation~\ref{req_2} for large $n$:
\begin{equation*}
\mathbb{P}\mathrm{r}\left[\frac{N(n,m,d)}{n}\sim\frac{1}{d^{3}}\right]\rightarrow 1, \ \text{as $n\rightarrow \infty$}
\end{equation*} 
\end{proof}
The last observation here is that $\alpha < \frac{1}{14}$ satisfies all conditions in Lemma~\ref{lemma_5}, thus concluding the proof of Theorem 1.
\end{proof}
\subsection{Further Tightening of the Bound}
Although successful, next we introduce a novel approach capable of tightening the bound to ${n^{\sfrac{1}{6}-\epsilon}}$ for any $\epsilon > 0$. The main idea is to carefully analyze the first of vertices as these contribute most to the in-degree. The statement of Theorem~\ref{Theo:US}, demonstrating our results, is similar to that of Theorem~\ref{Theo:Them} with a major difference being the tighter bound of ${n^{\sfrac{1}{6}-\epsilon}}$. 

\begin{theorem}
\label{Theo:US}
Let $m\geq 1$ be fixed and $\left(\mathbb{G}_{m}^{(n)}\right)_{n\geq 0}$ be the process defined in Section~\ref{Sec:M2}. For any $\epsilon >0$ and $1 \leq d \leq {n^{\sfrac{1}{6}-\epsilon}}$: 
\begin{equation*}
\mathbb{P}\mathrm{r}\left[\frac{N\left(n,m,d\right)}{n}\sim \frac{1}{d^{3}}\right] \rightarrow 1, \ \text{as $n\rightarrow \infty$} \enspace ,
\end{equation*}
with $N\left(n,m,d\right)$ is the number of vertices in graph $\mathbb{G}_{m}^{(n)}$ with an indegree $d$. 
\end{theorem}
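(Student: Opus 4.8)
The plan is to mirror the five-lemma architecture used for Theorem~\ref{Theo:Them}: reduce first to the case $m=1$ via the identification $\mathbb{G}^{(n)}_{m} = \mathbb{G}^{(nm)}_{1}$, and then push the expectation estimate $\mathbb{E}(N(d+1)) \approx \sfrac{n}{d^{3}}$ as far as the Azuma--Hoeffding step of Lemma~\ref{lemma_5} allows. That step controls the fluctuation of $N(n,m,d)$ around its mean only at scale $\sqrt{n\log n}$, so the conclusion $\sfrac{N}{n}\sim\sfrac{1}{d^{3}}$ survives precisely when the mean $\sfrac{n}{d^{3}}\asymp n^{1-3\alpha}$ dominates $\sqrt{n\log n}\asymp n^{\sfrac{1}{2}}$, i.e. when $\alpha<\sfrac{1}{6}$. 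Thus $\sfrac{1}{6}$ is the intrinsic ceiling of the method, and the whole task reduces to re-establishing $\mathbb{E}(N(d+1)) \approx \sfrac{n}{d^{3}}$ for every $\alpha<\sfrac{1}{6}$, rather than only for $\alpha<\sfrac{1}{14}$.

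To locate the slack, recall that the $\sfrac{1}{14}$ ceiling of Theorem~\ref{Theo:Them} arises from intersecting the approximation requirement $2\beta-\alpha>\sfrac{3}{2}$ of Claim~\ref{first_cond} with the domination requirement $\beta\le 1-3\alpha$ of Lemma~\ref{lemma_3}: feasibility of $\beta$ forces $\sfrac{3}{4}+\sfrac{\alpha}{2}<1-3\alpha$, i.e. $\alpha<\sfrac{1}{14}$. The upper bound $\beta\le 1-3\alpha$ is an artifact of estimating the excluded head block $\{v_{1},\dots,v_{M-1}\}$ by the crude $\mathcal{O}(M)=\mathcal{O}(n^{\beta})$. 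But for the large $\beta$ needed here the integrand $\sqrt{\sfrac{k}{n}}(1-\sqrt{\sfrac{k}{n}})^{d}$ peaks at $k\approx\sfrac{n}{d^{2}}\asymp n^{1-2\alpha}<n^{\beta}=M$, so this peak sits \emph{inside} the discarded head block. Discarding it therefore throws away the dominant contribution, and the careful analysis must instead retain it.

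Accordingly, the heart of the argument is to prove an analogue of Lemma~\ref{lemma_2} valid for $k$ far below $M$. Inspecting the three claims behind Lemma~\ref{lemma_2}, the requirements $2\beta-\alpha>\sfrac{3}{2}$ and $\beta-\alpha>\sfrac{1}{2}$ (Claims~\ref{first_cond} and~\ref{secon_cond}) are driven by the \emph{upper} endpoint $k=n-M$, where $(\sqrt{n}-\sqrt{k})^{2}$ and $2n-2\sqrt{kn}$ are smallest, and become only \emph{easier} for small $k$. The single obstruction at small $k$ is Claim~\ref{third_cond}, whose ratio $\sfrac{\sqrt{n\log n}}{2\sqrt{kn}-2k}\asymp\sfrac{\sqrt{\log n}}{\sqrt{k}}$ vanishes only when $k=\omega(\log n)$; the same threshold is needed for the concentration of $D_{k}$ in Lemma~\ref{lemma_1}, since the radius $\sqrt{n\log n}$ is smaller than the mean $\asymp\sqrt{kn}$ precisely when $k\ge\log n$. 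I would therefore fix a cutoff $k_{0}=\log^{1+\delta}n$ and re-run Claims~\ref{cl_1_lemma_2}--\ref{third_cond} on the enlarged range $k\in[k_{0},\,n-M]$, obtaining $\mathbb{P}\mathrm{r}[d_{k+1}=d+1]\approx(1+o(1))\sqrt{\sfrac{k}{n}}(1-\sqrt{\sfrac{k}{n}})^{d}$ throughout.

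With that in hand, the computation of Lemma~\ref{lemma_3} is redone over $[k_{0},n-M]$ in place of $[M,n-M]$: the Riemann sum now includes the peak at $k\approx\sfrac{n}{d^{2}}$ and converges to the full integral $\int_{0}^{1}\sqrt{u}(1-\sqrt{u})^{d}\,du=\sfrac{4}{(d+1)(d+2)(d+3)}$, giving $\mathbb{E}(N(d+1))\approx\sfrac{n}{d^{3}}$. The genuinely discarded head $\{v_{1},\dots,v_{k_{0}}\}$ now contributes only $\mathcal{O}(k_{0})=\mathcal{O}(\log^{1+\delta}n)$, while the discarded tail $\{v_{n-M+1},\dots,v_{n}\}$ contributes $o(1)$ because $(1-\sqrt{\sfrac{k}{n}})^{d}$ is exponentially small there for $d\ge1$; both are negligible against $n^{1-3\alpha}$ for every $\alpha<\sfrac{1}{6}$. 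Crucially the constraint $\beta\le1-3\alpha$ disappears, leaving only $2\beta-\alpha>\sfrac{3}{2}$, $\beta-\alpha>\sfrac{1}{2}$ and $0\le\beta\le1$, which are simultaneously satisfiable (e.g. by any $\beta\in(\sfrac{3}{4}+\sfrac{\alpha}{2},1]$) as soon as $\alpha<\sfrac{1}{2}$. Feeding this expectation estimate into the martingale argument of Lemmas~\ref{lemma_4}--\ref{lemma_5} then yields the claim for all $\alpha<\sfrac{1}{6}$. I expect the main obstacle to be exactly the small-$k$ control in the third step: verifying that the asymptotic expansions of Claim~\ref{cl_1_lemma_2}, whose error terms are of order $\sqrt{n\log n}$, remain negligible relative to $s+d\approx 2\sqrt{kn}-2k$ once $k$ is as small as $k_{0}$, since it is there that the early high-degree vertices responsible for the degree-$d$ count at large $\alpha$ are most delicate to estimate.
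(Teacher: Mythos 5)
Your plan retraces the paper's actual route almost step for step: the paper likewise fixes a polylogarithmic cutoff (it uses $\log^{2}n$ where you use $\log^{1+\delta}n$), re-verifies the facts behind Lemma~\ref{lemma_2} on the extended range in Claim~\ref{prob_claim} --- finding, exactly as you do, that the constraints $2\beta-\alpha>\sfrac{3}{2}$ and $\beta-\alpha>\sfrac{1}{2}$ bind only at the upper endpoint and that the sole small-$k$ obstruction is the ratio of Claim~\ref{third_cond}, which needs $k\gg\log n$ --- and then feeds the recomputed expectation into the unchanged Azuma--Hoeffding step with ceiling $1-3\alpha>\sfrac{1}{2}$, i.e.\ $\alpha<\sfrac{1}{6}$. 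Your diagnosis that $\beta\le 1-3\alpha$ is an artifact of the crude $\mathcal{O}(M)$ head bound, and that the peak of $\sqrt{\sfrac{k}{n}}\bigl(1-\sqrt{\sfrac{k}{n}}\bigr)^{d}$ at $k\approx \sfrac{n}{d^{2}}$ sits inside the discarded head for the relevant $\beta$, is precisely the paper's motivation. Your bookkeeping differs slightly and is arguably cleaner: the paper keeps the split at $M$ and runs a three-case analysis of $S_{1}$ (Claim~\ref{case_claim}), which retains a residual constraint $\sfrac{3\beta}{2}-\sfrac{1}{2}\le 1-3\alpha$ in the small-$d$ cases, takes $\beta=\sfrac{7}{8}+2\epsilon$, covers only $d\in[n^{\sfrac{1}{16}-\epsilon},n^{\sfrac{1}{6}-\epsilon}]$ directly, and invokes Theorem~\ref{Theo:Them} for smaller $d$; your single Riemann sum over $[k_{0},n-M]$ absorbs the head into the full integral in one sweep.

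There is, however, a genuine gap in your treatment of the last block $\{v_{n-M+1},\dots,v_{n}\}$. You dismiss it as $o(1)$ on the grounds that $\bigl(1-\sqrt{\sfrac{k}{n}}\bigr)^{d}$ is exponentially small there, but this applies the Lemma~\ref{lemma_2} asymptotics outside their domain of validity: for $k>n-M$ the quantity $2n-2\sqrt{kn}$ falls below $M$ and becomes comparable to the $\mathcal{O}(\sqrt{n\log n})$ error terms --- Claims~\ref{first_cond} and~\ref{secon_cond} are exactly what caps the admissible range at $n-M$ --- so the formula $\sqrt{\sfrac{k}{n}}\bigl(1-\sqrt{\sfrac{k}{n}}\bigr)^{d}$ is simply unavailable on this block. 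Moreover, even taken formally, the contribution is not $o(1)$: the block has $M$ terms each of size up to roughly $(\sfrac{M}{2n})^{d}$, giving $\approx M(\sfrac{M}{2n})^{d}$, which at $d=1$ and $\beta=\sfrac{7}{8}$ is of order $n^{\sfrac{3}{4}}$ up to logarithms --- still negligible against $\sfrac{n}{d^{3}}$, but only after an argument you have not supplied. The paper closes exactly this hole with a separate combinatorial lemma (Lemma~\ref{lemma_12}): a direct computation in the attachment process gives $\mathbb{P}\mathrm{r}\left[d_{n-M+1}=d+1\right]\lesssim M^{d}/(2^{d}n^{d})$, this dominates the other nodes of the block, and hence $S_{2}\le \sfrac{2n}{d^{3}}$. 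Your proposal needs this lemma (or an equivalent direct estimate); as justified, that step fails. The remainder of the plan is sound and matches the paper's proof.
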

\begin{proof}
From the proof of Theorem~1, we recognize that the main restriction on inequality $\alpha$ is caused by $1-3\alpha < \beta$. This guarantees that $\frac{n}{d^3}$ dominates $M = \frac{n^{\beta}}{\log n}$ in Equation~\ref{exp_approx}. 
To relax this requirement, we study the contributions of nodes $\{v_1, v_2, \ldots, v_M \}$ and $\{v_{n-M+1},v_{n-M+2}, \ldots, v_{n}\}$ in Equation~\ref{exp_approx} more carefully. Namely:
\begin{align}\label{exp_approx_2}
\mathbb{E}(N(d + 1)) &\approx \sum_{k = 1}^{M}\mathbb{P}\mathrm{r}\left[d_{k+1} = d + 1 \right] + \frac{4n}{(d+1)(d+2)(d+3)}  \\ \nonumber
&\hspace{10em}+ \sum_{k = n - M + 1}^{n}\mathbb{P}\mathrm{r}\left[d_{k+1} = d + 1 \right] + \mathcal{O}(n^{-\sfrac{3}{4}})  \\ \nonumber
&=\mathcal{O}(\log^{2} n) + \sum_{k = \log^{2} n}^{M}\mathbb{P}\mathrm{r}\left[d_{k+1} = d + 1 \right] + \frac{4n}{(d+1)(d+2)(d+3)}  \\ \nonumber
&\hspace{10em}+ \sum_{k = n - M + 1}^{n}\mathbb{P}\mathrm{r}\left[d_{k+1} = d + 1 \right] + \mathcal{O}(n^{-\sfrac{3}{4}}) \\\nonumber
&=\sum_{k = \log^{2} n}^{M}\mathbb{P}\mathrm{r}\left[d_{k+1} = d + 1 \right] + \sum_{k = n - M + 1}^{n}\mathbb{P}\mathrm{r}\left[d_{k+1} = d + 1 \right] \\\nonumber &\hspace{10em} + \frac{4n}{(d+1)(d+2)(d+3)}  + \mathcal{O}(\log^{2}n) \enspace .
\end{align}
The proof of Theorem~\ref{Theo:US} is based on estimating the sums in Equation~\ref{exp_approx_2}:
\begin{lemma}\label{lemma_11}
Let $S_1 = \sum_{k = log^2n}^{M}\mathbb{P}\mathrm{r}\left[d_{k+1} = d + 1 \right]$, and $\alpha\in [0,\frac{1}{3}]$, $\beta
\in(0,1]$. Then:
\begin{enumerate}
\item For all $(\alpha, \beta)$ such that $\beta \le 1 - 2\alpha$, it is true for large $n$ that:
\begin{equation}
S_1 \approx M\sqrt{\frac{M}{n}} = \Theta\left(\frac{n^{\frac{3\beta}{2} - \frac{1}{2}}}{log^{\sfrac{3}{2}}n}\right)   \enspace .
\end{equation}
\item For $d < n^{\frac{1-\beta}{2}}$ and all $(\alpha, \beta)$ such that $\beta > 1 - 2\alpha$ the following is true for large $n$ :
\begin{equation}
S_1 \le \mathcal{O}\left(\frac{n^{\frac{3\beta}{2} - \frac{1}{2}}}{log^{\sfrac{3}{2}}n}\right) \enspace .
\end{equation}
\item For $d\in [n^{\frac{1-\beta}{2}}, n^{\alpha}]$ and all $(\alpha, \beta)$ such that $\beta > 1 - 2\alpha$ the following is true for large $n$ :
\begin{equation}
S_1 \approx \mathcal{O}\left(\frac{n}{d^3}\right) \enspace .
\end{equation}
\end{enumerate}
\end{lemma}
\begin{proof}
The main idea is to show that when $k\in [log^2n, M]$, the expressions in Equation~\ref{degree_distr_approx} still hold.

\begin{claim}\label{prob_claim}
Let $d \le n^\alpha$ with $\alpha\in [0, \frac{1}{3}]$ and $\beta\in (0,1]$, and let $d_{k+1}$ be the total degree of node $k + 1$, where $k\in[log^2n, M]$. Then for large $n$:
\begin{equation}\label{degree_distr_approx_2}
\mathbb{P}\mathrm{r}\left[d_{k+1} = d + 1 \right] \approx (1 + o(1))\sqrt{\frac{k}{n}}\left(1 - \sqrt{\frac{k}{n}} \right)^d + \mathcal{O}(n^{-\sfrac{7}{4}})  \enspace .
\end{equation}
\end{claim}
\begin{proof}
Following the proof of Lemma~\ref{lemma_2}, the following facts need to be verified: 
\begin{enumerate}
\item Let $k\in [\log^{2}n, M]$ and $|D - 2\sqrt{kn}| \le 4\sqrt{n\log n}$. Then for large $n$:
\begin{align}\label{approx_for_cond_2}
\mathbb{P}\mathrm{r}\left[d_{k+1} = d + 1 | D_k = D \right] &\approx 2^d\frac{(2\sqrt{kn} - 2k + \mathcal{O}(\sqrt{n\log n}))}{(2n - 2\sqrt{kn} + \mathcal{O}(\sqrt{n\log n}))} \\\nonumber
&\hspace{5em}\times \left(\frac{(\sqrt{n} - \sqrt{k})^2 + \mathcal{O}(n\log n)}{2n - 2\sqrt{kn} + \mathcal{O}(\sqrt{n\log n})}\right)^d.
\end{align}
\item Let $k\in [\log^{2}n, M]$, and $d\le n^\alpha$ with $\alpha\in [0, \frac{1}{3}]$ and $\beta\in [0,1]$, then:
\begin{equation*}
\frac{(\sqrt{n} - \sqrt{k})^2}{d\sqrt{nlogn}} \to \infty
\end{equation*}
 as $n\to \infty$.
\item Let $k\in [\log^{2}n, M]$, and $d\le n^\alpha$ with $\alpha\in [0, \frac{1}{3}]$ and $\beta\in [0,1]$, then:
\begin{equation*}
\frac{(2n - 2\sqrt{kn})}{d\sqrt{n\log n}} \to \infty
\end{equation*}
 as $n\to \infty$.
\item Let $k\in [\log^{2}n, M]$, and $d\le n^\alpha$ with $\alpha\in [0, \frac{1}{3}]$. Then for $\beta\in (0,1]$:
\begin{equation*}
\frac{\sqrt{n\log n}}{2\sqrt{kn} - 2k} \to 0
\end{equation*}
 as $n\to \infty$.
\end{enumerate}
The above facts can be proven accordingly: 
\begin{enumerate}
\item Similarly to Claim~\ref{cl_1_lemma_2} where $|D_k - 2\sqrt{kn}| \le 4\sqrt{n\log n}$, we can conclude that: 
\begin{equation*}
2\sqrt{kn} - 2k - 4\sqrt{n\log n} \le s  \le 2\sqrt{kn} - 2k  + 4\sqrt{n\log n} \enspace .
\end{equation*}
Therefore, the approximations of Equations~\ref{appr_1},~\ref{appr_2}, and~\ref{appr_3} hold.
Since $d\le n^{\alpha}$ with $\alpha\in[0,\frac{1}{3}]$, then $s + d\approx 2\sqrt{kn} - 2k + \mathcal{O}(\sqrt{n\log n})$. Hence, the expression in Equation~\ref{cond_prob_expression} can be written as:
\begin{align*}
\mathbb{P}\mathrm{r}&\left[ d_{k+1} = d + 1| D_k = 2k + s \right]  = \frac{2^d(s + d)!(n - k - s)!(2n - 2k - s - d - 1)!}{(n - k - s - d)!(2n - 2k - s)!}  \\\nonumber
&\hspace{6em}=\frac{2^d(s + d)!(n - k - s)(n-k- s - 1)\cdots(n - k - s - d + 1)}{(2n - 2k - s - 1)(2n - 2k - s - 2)\cdots(2n - 2k - s - d)}  \\\nonumber
&\hspace{6em}\approx 2^d\frac{(2\sqrt{kn} - 2k + \mathcal{O}(\sqrt{n\log n})}{(2n - 2\sqrt{kn} + \mathcal{O}(\sqrt{n\log n}))}\left(\frac{(\sqrt{n} - \sqrt{k})^2 + \mathcal{O}(\sqrt{n\log n})}{2n - 2\sqrt{kn} + \mathcal{O}(\sqrt{n\log n})}\right)^d \enspace .
\end{align*}
Thus, Fact 1 is established. 

\item For $k\in [\log^{2}n, M]$, we have:
\begin{align}\label{inter_res_2}
&\log n\le \sqrt{k}\le \sqrt{M} \Rightarrow \\\nonumber
&\sqrt{n} - \sqrt{M} \le \sqrt{n} - \sqrt{k}\le \sqrt{n} - \log n \enspace .
\end{align}
Further, since $d\le n^{\alpha}$:
\begin{align*}
\frac{(\sqrt{n} - \sqrt{k})^2}{d\sqrt{n\log n}} \ge \frac{(\sqrt{n} - \sqrt{M})^2}{d\sqrt{n\log n}} & = \frac{n\left(1 - \sqrt{\frac{M}{n}}\right)^2}{d\sqrt{n\log n}} \\
&\ge \frac{n^{1 - \sfrac{1}{2} - \alpha}\left(1 - \sqrt{\frac{M}{n}}\right)^2}{\sqrt{\log n}} \approx 
\frac{n^{1 - \sfrac{1}{2} - \alpha}}{\sqrt{\log n}} \enspace .
\end{align*}
If $\alpha < \frac{1}{2}$ then the last expression tends to $\infty$ as $n\to\infty$, thus establishing Fact 2. 

\item Using Equation~\ref{inter_res_2}, we recognize:
\begin{align*}
\frac{2n - 2\sqrt{kn}}{d\sqrt{n\log n}} &= \frac{2(\sqrt{n} - \sqrt{k})}{d\sqrt{\log n}} \ge \frac{2(\sqrt{n} - \sqrt{M})}{d\sqrt{\log n}} \\
&= \frac{2\sqrt{n}\left(1 - \sqrt{\frac{M}{n}}\right)}{d\sqrt{\log n}} \ge 
\frac{2n^{\sfrac{1}{2} - \alpha}\left(1 - \sqrt{\frac{M}{n}}\right)}{\sqrt{\log n}} \approx \frac{n^{\sfrac{1}{2} - \alpha}}{\sqrt{\log n}}
\enspace .
\end{align*}
Fact 3 can be established if $\alpha < \frac{1}{2}$. 

\item Using Equation~\ref{inter_res_2}, we recognize:
\begin{equation*}
\frac{1}{\sqrt{n} - \log n}\le \frac{1}{\sqrt{n} - \sqrt{k}}\le \frac{1}{\sqrt{n} - \sqrt{M}} \enspace .
\end{equation*}
Therefore,
\begin{align*}
&\frac{\sqrt{n\log n}}{2\sqrt{kn} - 2k} \le \frac{\sqrt{n\log n}}{\log n(\sqrt{n} - \sqrt{M})} = \frac{1}{\sqrt{\log n}\left(1 - \sqrt{\frac{M}{n}}\right)} \approx \frac{1}{\sqrt{\log n}} \enspace .
\end{align*}
This expression tends to infinity as $n\to\infty$, thus establishing Fact 4. 
\end{enumerate}
Having the above facts, we can write: 
\begin{align*}
\mathbb{P}\mathrm{r}\left[d_{k+1} = d + 1 | D_k = D \right] &\approx 2^d\frac{(2\sqrt{kn} - 2k + \mathcal{O}(\sqrt{nlogn}))}{(2n - 2\sqrt{kn} + \mathcal{O}(\sqrt{nlogn}))}\left(\frac{(\sqrt{n} - \sqrt{k})^2 + \mathcal{O}(nlogn)}{2n - 2\sqrt{kn} + \mathcal{O}(\sqrt{nlogn})}\right)^d  \\\nonumber
&\approx(1 + o(1))\sqrt{\frac{k}{n}}\left(1 - \sqrt{\frac{k}{n}}\right)^d 
\end{align*}

As in Lemma~\ref{lemma_2}, we have that for all $\alpha\in [0,\frac{1}{3}]$, $\beta\in(0,1]$, and for large $n$:
\begin{equation}\label{degree_distr_approx_2}
\mathbb{P}\mathrm{r}\left[d_{k+1} = d + 1 \right] \approx (1 + o(1))\sqrt{\frac{k}{n}}\left(1 - \sqrt{\frac{k}{n}} \right)^d + \mathcal{O}(n^{-\sfrac{7}{4}}) \enspace .
\end{equation}
Therefore, $S_1$ can be written as:
\begin{align}
S_1 =  \sum_{k = log^2n}^{M}\mathbb{P}\mathrm{r}\left[d_{k+1} = d + 1 \right]  \approx (1 + o(1))\sum_{k = log^2n}^{M}\sqrt{\frac{k}{n}}\left(1 - \sqrt{\frac{k}{n}} \right)^d + \mathcal{O}(n^{-\sfrac{3}{4}}) \enspace .
\end{align}

\begin{claim}\label{case_claim}
Let $M = \frac{n^{\beta}}{logn}$ and $d\le n^{\alpha}$ with $\alpha\in[0,\frac{1}{3}]$ and $\beta\in(0,1]$. Denote $\mathcal{Z}(n,\alpha, \beta)= \sum_{k = \log^{2}n}^{M}\sqrt{\frac{k}{n}}\left(1 - \sqrt{\frac{k}{n}} \right)^d$.  Then:
\begin{enumerate}
\item For all $(\alpha, \beta)$ such that $\beta \le 1 - 2\alpha$:
\begin{equation}\label{case_1}
\mathcal{Z}(n,\alpha, \beta) = \Theta\left(\frac{n^{\frac{3\beta}{2} - \frac{1}{2}}}{\log^{\sfrac{3}{2}}n}\right)
\end{equation}
 for large $n$.
\item For all $(\alpha, \beta)$ such that $\beta > 1 - 2\alpha$ and $d< n^{\frac{1-\beta}{2}}$
\begin{equation}\label{case_2}
\mathcal{Z}(n,\alpha, \beta) \le \Theta\left(\frac{n^{\frac{3\beta}{2} - \frac{1}{2}}}{\log^{\sfrac{3}{2}}n}\right)
\end{equation}
 for large $n$.
 \item For all $(\alpha, \beta)$ such that $\beta > 1 - 2\alpha$ and $d\in [n^{\frac{1-\beta}{2}}, n^{\alpha}]$
\begin{equation}\label{case_3}
\mathcal{Z}(n,\alpha, \beta) \le \mathcal{O}\left(\frac{n}{d^3}\right)
\end{equation}
 for large $n$.
\end{enumerate}
\end{claim}

\begin{proof}We separate the proof in two parts, and consider each case separately:
\begin{enumerate}
\item \textbf{Case $\beta \le 1 - 2\alpha$:} Using the fact that $\sum_{l=1}^{K}\sqrt{l}\approx K\sqrt{K}$, the straightforward upper bound for $\mathcal{Z}(n,\alpha, \beta)$ has the following form:
\begin{align}\label{uppb_1}
&Z(n,\alpha, \beta) = \sum_{k = log^2n}^{M}\sqrt{\frac{k}{n}}\left(1 - \sqrt{\frac{k}{n}} \right)^d \le \sum_{k = log^2n}^{M}\sqrt{\frac{k}{n}} \approx M\sqrt{\frac{M}{n}} = \Theta\left(\frac{n^{\frac{3\beta}{2} - \frac{1}{2}}}{log^{\sfrac{3}{2}}n}\right) \enspace .
\end{align}
To arrive at the lower bound, we make use of Bernoulli's inequality to get:
\begin{align*}
\mathcal{Z}(n, \alpha, \beta) = \sum_{k = \log^{2}n}^{M}\sqrt{\frac{k}{n}}\left(1 - \sqrt{\frac{k}{n}} \right)^d &\ge \sum_{k = \log^{2}n}^{M}\sqrt{\frac{k}{n}}\left(1 - d\sqrt{\frac{k}{n}} \right)  \\ 
&\hspace{-3em}= \sum_{k = \log^{2}n}^{M}\sqrt{\frac{k}{n}} - \frac{d}{n}\sum_{k = \log^{2}n}^{M}k  \approx M\sqrt{\frac{M}{n}} - \frac{dM^2}{n} \\
&\approx \frac{n^{\frac{3\beta}{2} -\frac{1}{2}}}{\log^{\sfrac{3}{2}}n} - \frac{n^{\alpha + 2\beta - 1}}{\log^{2}n} \approx \frac{n^{\frac{3\beta}{2} -\frac{1}{2}}}{\log^{\sfrac{3}{2}}n} 
\enspace ,
\end{align*} 
where the last step is performed since $\beta \le 1 - 2\alpha$ implies $\frac{3\beta}{2} - \frac{1}{2}\ge \alpha + 2\beta - 1$. Hence:
\begin{equation*}
\mathcal{Z}(n,\alpha,\beta) = \Theta\left(\frac{n^{\frac{3\beta}{2} - \frac{1}{2}}}{log^{\sfrac{3}{2}}n}\right) \enspace ,
\end{equation*}
concluding the proof of Equation~\ref{case_1}.
\item \textbf{Case $\beta > 1 - 2\alpha$}. First denote $c_k = \sqrt{\frac{k}{n}}\left(1 - \sqrt{\frac{k}{n}}\right)^d$, and consider the ratio:
\begin{align*}
r_k = \frac{c_k}{c_{k+1}} =  \sqrt{\frac{k}{k+1}}\left(\frac{1 - \sqrt{\frac{k}{n}}}{1 - \sqrt{\frac{k+1}{n}}}\right)^d &= \sqrt{\frac{k}{k+1}}\left(1 + \frac{\sqrt{k+1} - \sqrt{k}}{\sqrt{n} - \sqrt{k+1}}\right)^d  \\
&\hspace{-1em}=\sqrt{\frac{k}{k+1}}\left(1 + \frac{1}{(\sqrt{k+1} + \sqrt{k})(\sqrt{n} - \sqrt{k+1})}\right)^d  \\
&= \sqrt{\frac{k}{k+1}}e^{d\log\left(1 + \frac{1}{(\sqrt{k+1} + \sqrt{k})(\sqrt{n} - \sqrt{k+1})}\right)} \\\nonumber
&\approx\sqrt{\frac{k}{k+1}}e^{\frac{d}{(\sqrt{k+1} + \sqrt{k})(\sqrt{n} - \sqrt{k+1})}}  \\
&= \sqrt{1 - \frac{1}{k+1}}e^{\frac{d}{\sqrt{n(k+1)}\left(1 + \sqrt{\frac{k}{k+1}}\right)\left(1 - \sqrt{\frac{k+1}{n}}\right)}} \\\nonumber
&\approx\left(1 - \frac{1}{2(k+1)}\right)e^{\frac{d\left(1 + \sqrt{\frac{k+1}{n}}\right)}{2\sqrt{k(n+1)}}}  \\
&= \left(1 - \frac{1}{2(k+1)}\right)e^{\frac{d}{2\sqrt{n(k+1)}} + \frac{d}{2n}} \\
&\approx \left(1 - \frac{1}{2k}\right)\left(1 + \frac{d}{2\sqrt{kn}}\right)\left(1 + \frac{d}{2n}\right) \\\nonumber
&\approx 1 - \frac{1}{2k} + \frac{d}{2\sqrt{kn}} + \frac{d}{2n} \approx 1 - \frac{1}{2k} + \frac{d}{2\sqrt{kn}}
\enspace ,
\end{align*}
where the last step is followed since: $\frac{d}{2n} = o\left(\frac{d}{2\sqrt{kn}}\right)$. Therefore, if $k \ge \frac{n}{d^2}$, then $c_k$ decreases as $k$ increases. Further, if $k < \frac{n}{d^2}$, then $c_k$ increases with $k$. Here, we need to consider the following two cases: 
\begin{enumerate}
\item \textbf{Case: $d\le n^{\frac{1-\beta}{2}}$:} Notice that in this case $\frac{n}{d^2}\ge M$ and $c_k$ increases with $k$, and therefore:
\begin{align*}
\mathcal{Z}(n,\alpha, \beta) = \sum_{k = \log^{2}n}^{M}\sqrt{\frac{k}{n}}\left(1 - \sqrt{\frac{k}{n}} \right)^d &\le M\sqrt{\frac{M}{n}}\left(1 - \sqrt{\frac{M}{n}}\right)^d  \\
&\le M\sqrt{\frac{M}{n}} = 
\Theta\left(\frac{n^{\frac{3\beta}{2} - \frac{1}{2}}}{\log^{\sfrac{3}{2}}n}\right) \enspace ,
\end{align*}
which finishes the proof of Equation~\ref{case_2}.
 
\item \textbf{Case: $d\in [n^{\frac{1-\beta}{2}}, n^{\alpha}]$:}\\ Notice that in this case $\log^{2}n < \sfrac{n}{d^2} < M$ and $c_k$ increases when $k\le \frac{n}{d^2}$ and decreases when $\frac{n}{d^2}\le k\le M$. Denote $m_0 = \sfrac{n}{d^2}$ and split the sum $\mathcal{Z}(n,\alpha,\beta)$:
\begin{align}\label{sum_z}
\mathcal{Z}(n,\alpha, \beta) = \underbrace{\sum_{k = \log^{2}n}^{m_0}\sqrt{\frac{k}{n}}\left(1 - \sqrt{\frac{k}{n}} \right)^d}_{\mathcal{Z}_1} + \underbrace{\sum_{k = m_0}^{M}\sqrt{\frac{k}{n}}\left(1 - \sqrt{\frac{k}{n}} \right)^d}_{\mathcal{Z}_2}
\enspace .
\end{align}
Now, consider each of $\mathcal{Z}_1$ and $\mathcal{Z}_2$ separately:
\begin{align}\label{sum_z1}
\mathcal{Z}_1 &= \sum_{k = log^2n}^{m_0}\sqrt{\frac{k}{n}}\left(1 - \sqrt{\frac{k}{n}} \right)^d \le m_0c_{m_0} = m_0\sqrt{\frac{m_0}{n}}\left(1 - \sqrt{\frac{m_0}{n}}\right)^d \\\nonumber
&=m_0\sqrt{\frac{m_0}{n}}\left(1 - \frac{1}{d}\right)^d \approx m_0\sqrt{\frac{m_0}{n}} = \mathcal{O}\left(\frac{n}{d^3}\right)
\enspace ,
\end{align}
where we made of use of $(1 - \frac{1}{x})^x \le e^{-1} $ for $x\ge 1$.  
\begin{align}\label{sum_z2}
\mathcal{Z}_2 &= \sum_{k = m_0}^{M}\sqrt{\frac{k}{n}}\left(1 - \sqrt{\frac{k}{n}} \right)^d \le \sum_{k = m_0}^{M}\sqrt{\frac{k}{n}}e^{\left(-d\sqrt{\frac{k}{n}}\right)} \le \int_{m_0}^{\infty}\sqrt{\frac{x}{n}}e^{\left(-d\sqrt{\frac{x}{n}}\right)} dx \\\nonumber
&=2n\int_{\frac{1}{d}}^{\infty}t^2e^{-dt} dt = 2n\left(-\frac{t^2}{d}d^{-dt} - \frac{2t}{d^2}e^{-dt} - \frac{2}{d^3}e^{-dt}\right)\Bigg|_{\frac{1}{d}}^{\infty} \\
& = \frac{10n}{d^3}e^{-1} = \mathcal{O}\left(\frac{n}{d^3}\right)
\enspace .
\end{align}
Using Equations~\ref{sum_z1} and ~\ref{sum_z2} in Equation~\ref{sum_z} gives the result in Equation~\ref{case_3}.
\end{enumerate}
\end{enumerate}
\end{proof}
Lemma~\ref{lemma_11} follows by applying Claim~\ref{case_claim} to Claim~\ref{prob_claim}. 
\end{proof}

\begin{lemma}\label{lemma_12}
Let $S_2 = \sum_{k = n - M + 1}^{n}\mathbb{P}\mathrm{r}\left[d_{k+1} = d + 1 \right]$, and $\alpha\in [0,\frac{1}{3}]$, $\beta
\in(0,1)$. Then for large $n$
\begin{equation}\label{sec_sum}
S_2 = \mathcal{O}\left(\frac{n}{d^3}\right) \enspace .
\end{equation}
\end{lemma}

\begin{proof}
In this case, $k \in[n - M + 1, n]$. Notice that if $d > M$, then $S_2 = 0$, because it is impossible for node $v_k$ to accumulate $d$ links from later nodes. Here, $\alpha < \beta$. Considering $v_{n - M + 1}\in \mathbb{G}^{(n)}_{m}$.
\begin{claim}
Let $d_{n- M + 1}$ be the degree of node $v_{n - M + 1}\in\mathbb{G}^{(n)}_{m}$, and $d\le n^{\alpha}$ with $\alpha < \beta$ Then:
\begin{equation}
\mathbb{P}\mathrm{r}\left[d_{n - M + 1} = d + 1\right] \approx  \frac{M^d}{2^dn^d} \enspace .
\end{equation}
\end{claim}

\begin{proof}
Node $v_{n-M+1}$ has total degree $d+1$ iff exactly $d$ nodes from $\{v_{n-M+2}, v_{n-M+3}, \ldots, v_{n}\}$ connect to $v_{n-M+1}$. Therefore, the total number of ways to pick $d$ nodes out from $M - 1$ is given by $\left(\begin{array}{c}M - 1 \\ d\end{array}\right)$. 

Let $\mathbb{P}\mathrm{r}\left[d_{n - M + 1} = d + 1, \{v_{n - M + r_1}, v_{n - M + r_2}, \ldots, v_{n - M + r_d}\}\right]$ represent the probability that nodes $\{v_{n-M + r_1}, v_{n-M + r_2}, \ldots, v_{n - M + r_d}\}$ make connection with $v_{n - M + 1}$, where $r_i\in [2, M]$. Notice, that because $r_1 \ge 2, r_2\ge 3,\ldots r_d\ge d+1$:
\begin{align*}
\mathbb{P}\mathrm{r}\left[d_{n - M + 1} = d + 1, \{v_{r_1}, v_{r_2}, \ldots, v_{r_d}\}\right] &= \frac{1}{2(n-M+ r_1 -1)} \cdots \frac{d}{2(n-M+r_d -1)}  \\\nonumber
&\le\frac{d!}{2^d(n-M+ 1)(n-M+ 2)\cdots(n-M+ d)} \enspace .
\end{align*}
Therefore,
\begin{align*}
\mathbb{P}\mathrm{r}\left[d_{n - M + 1} = d + 1\right] &\le \left(\begin{array}{c}M - 1 \\ d\end{array}\right)\frac{d!}{2^d(n-M+1)(n-M+2)\cdots n}  \\\nonumber
&=\frac{1}{2^d}\frac{(M - d)(M - d + 1)\cdots (M-1)}{(n - M + 1)(n - M + 2)\cdots(n - M + d)}  \\
&= \frac{1}{2^d}\frac{M^d}{n^d}\frac{(1 - \frac{d}{M})\cdots(1 - \frac{1}{M})}{(1 - \frac{M-1}{n})\cdots (1 - \frac{M-d}{n})} \\\nonumber
&\approx\frac{M^d}{2^dn^d} \enspace .
\end{align*}		
\end{proof}  
It is easy to see that $\mathbb{P}\mathrm{r}\left[d_{k} = d + 1\right] \le \mathbb{P}\mathrm{r}\left[d_{n - M + 1} = d + 1\right]$ for other nodes $v_k$ with $k\in [n-  M + 1, n]$.
Therefore, using that $2^{d+1} \ge d^3$ and $\beta(d+1) < d$ for large $n$,
\begin{align*}
&S_2 = \sum_{k = n - M + 1}^{n}\mathbb{P}\mathrm{r}\left[d_{k+1} = d + 1 \right] \le  M\frac{M^d}{2^dn^d} = \frac{M^{d + 1}}{2^dn^d} \le \frac{n^{\beta(d+1)}}{2^dn^{d}} = \frac{n^{\beta(d+1) - d}}{2^d} \le \frac{2n}{d^3}
\enspace .
\end{align*}
\end{proof}
Lemmas~\ref{lemma_11} and~\ref{lemma_12} applied in Equation~\ref{exp_approx_2} give the following three inequalities:
\begin{enumerate}
\item For all $\alpha\in [0,\frac{1}{3}]$, $\beta\in (0,1)$ such that 
\begin{equation}\label{system_1}
\begin{cases}
2\beta - \alpha > \frac{3}{2} \\ \beta - \alpha > \frac{1}{2}\\\beta \le 1 - 2\alpha \\\frac{3\beta}{2} - \frac{1}{2} \le 1 - 3\alpha
\end{cases}
\end{equation}
for large $n$
\begin{equation*}
\mathbb{E}(N(d + 1)) \approx \frac{n}{d^3} \enspace .
\end{equation*}
\item For $d < n^{\frac{1 -\beta}{2}}$ and for all $\alpha\in [0,\frac{1}{3}]$, $\beta\in (0,1)$ such that 
\begin{equation}\label{system_2}
\begin{cases}
2\beta - \alpha > \frac{3}{2} \\ \beta - \alpha > \frac{1}{2}\\\beta > 1 - 2\alpha \\ \frac{3\beta}{2} - \frac{1}{2} \le 1 - 3\alpha
\end{cases}
\end{equation}
for large $n$
\begin{equation*}
\mathbb{E}(N(d + 1)) \approx \frac{n}{d^3} \enspace .
\end{equation*}
\item For $d\in[n^{\frac{1 -\beta}{2}}, n^{\alpha}]$ and for all $\alpha\in [0,\frac{1}{3}]$, $\beta\in (0,1)$ such that 
\begin{equation}\label{system_3}
\begin{cases}
2\beta - \alpha > \frac{3}{2} \\ \beta - \alpha > \frac{1}{2}\\\beta > 1 - 2\alpha
\end{cases}
\end{equation}
for large $n$
\begin{equation*}
\mathbb{E}(N(d + 1)) \approx \frac{n}{d^3} \enspace .
\end{equation*}
\end{enumerate}
The application of the Azuma-Hoeffding inequality requires an additional constraint of: $1 - 3\alpha <\frac{1}{2}$. Taking this into account, it is easy to see that for any small enough $\epsilon > 0$,  $\alpha = \frac{1}{6} - \epsilon$ and $\beta = \frac{7}{8}+ 2\epsilon$ are feasible. Therefore, for $d\in [n^{\frac{1}{16}-\epsilon},  n^{\frac{1}{6} -\epsilon}]$:
\begin{equation*}
\mathbb{P}\mathrm{r}\left[\frac{N(n,m,d)}{n}\sim\frac{1}{d^{3}}\right]\rightarrow 1, \ \text{as $n\rightarrow \infty$} \enspace .
\end{equation*}
Using this result with the result of Theorem~\ref{Theo:Them} establishes the statement of Theorem~\ref{Theo:US}.
\end{proof}
\end{proof}
\section{Discussion \& Conclusion} \label{Sec:Conclusion}
This paper presents two new bounds on the degree distribution of networks following from the preferential attachment model.  Due to its generality and impact, we adopted the preferential attachment formalization proposed by Bollob\'{a}s \etal\cite{Bollobas1} as the framework for our analysis. These new bounds were presented in two Theorems. Theorem~\ref{Theo:Them} shows that we are able to tighten the bound of Bollob\'{a}s \etal\cite{Bollobas1} from $d \leq n^{\sfrac{1}{15}}$ to $d \leq n^{\sfrac{1}{14}-\epsilon}$ for any $\epsilon > 0$. Theorem~\ref{Theo:US} then shows that we can further improve this bound to $d \leq n^{\sfrac{1}{6} -\epsilon}$ for any $\epsilon > 0$, yielding the tightest bound currently available on the degree distribution. We achieve this bound by introducing a novel technique capable of carefully analyzing the first set of vertices in complex networks. 

An interesting question is to what extent can the bound on Theorem~\ref{Theo:US} be further tightened? Definitely, this is an interesting direction for future work. Here, however, we provide a corollary showing that for $\sfrac{1}{6} \leq  \bm{\alpha} < \sfrac{1}{3}$ (the valid $\bm{\alpha}$ range to be considered), the probability that the portion of vertices exhibiting a power-law distribution tends to zero as $n \rightarrow \infty$: 

\begin{corollary}
\label{Theo:US2}
Let $m\geq 1$ be fixed and $\left(\mathbb{G}_{m}^{(n)}\right)_{n\geq 0}$ be the process defined in Section~\ref{Sec:M2}. For ${n^{\sfrac{1}{6}}}\leq  d < {n^{\sfrac{1}{3}}}$: 
\begin{equation*}
\mathbb{P}\mathrm{r}\left[\frac{N\left(n,m,d\right)}{n} = 0 \right] \rightarrow 1, \ \text{as $n\rightarrow \infty$}
\end{equation*}
with $N\left(n,m,d\right)$ is the number of vertices in graph $\mathbb{G}_{m}^{(n)}$ with an indegree $d$. %
\end{corollary}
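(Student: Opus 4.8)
The plan is to read the conclusion $\mathbb{P}\mathrm{r}[\sfrac{N(n,m,d)}{n}=0]\to 1$ as the assertion that the proportion of degree-$d$ vertices vanishes in probability, and to prove it by pairing the first-moment estimate underlying Theorem~\ref{Theo:US} with the Azuma--Hoeffding concentration of Lemmas~\ref{lemma_4} and~\ref{lemma_azuma}. Writing $d=n^{\alpha}$ with $\alpha\in[\sfrac{1}{6},\sfrac{1}{3})$, I would first verify that the expectation analysis of Theorem~\ref{Theo:US} still applies: the summand $\mathbb{P}\mathrm{r}[d_{k+1}=d+1]\approx\sqrt{\sfrac{k}{n}}\,(1-\sqrt{\sfrac{k}{n}})^{d}$ peaks at $k_0=\sfrac{n}{d^{2}}\le n^{\sfrac{2}{3}}$, which falls inside the first-vertex regime handled by Case~3 of Lemma~\ref{lemma_11} once $\beta$ is chosen with $\beta>1-2\alpha$ and $d\ge n^{\sfrac{1-\beta}{2}}$ (for every such $\alpha$ one may take, e.g., $\beta$ slightly below $1$). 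Collecting $S_1$, $S_2$ and the dominant integral term as in Lemma~\ref{lemma_3} then gives $\mathbb{E}(N(n,m,d))=\Theta(\sfrac{n}{d^{3}})=\Theta(n^{1-3\alpha})$, so in particular $\mathbb{E}(N(n,m,d))=\mathcal{O}(n^{\sfrac{1}{2}})=o(n)$ uniformly over the stated window.

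The remaining steps are short. Since $\mathbb{E}(N(n,m,d))=o(n)$, Markov's inequality already yields $\mathbb{P}\mathrm{r}[\sfrac{N(n,m,d)}{n}\ge\epsilon]\le\sfrac{\mathbb{E}(N(n,m,d))}{\epsilon n}=\mathcal{O}(n^{-\sfrac{1}{2}})\to 0$ for every fixed $\epsilon>0$, which is convergence of the proportion to $0$. To match the sharper framing of the paper I would also record the Azuma--Hoeffding route: Lemma~\ref{lemma_4} gives the bounded-difference martingale $Y_t=\mathbb{E}(N(n,m,d)\mid\mathcal{F}^{t}_{m})$ with $|Y_t-Y_{t-1}|\le 2$, so Lemma~\ref{lemma_azuma} at deviation $\sqrt{n\log n}$ gives $\mathbb{P}\mathrm{r}[|N(n,m,d)-\mathbb{E}(N(n,m,d))|\ge\sqrt{n\log n}]=\mathcal{O}(n^{-\sfrac{1}{8}})$. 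On the resulting high-probability event $N(n,m,d)\le\mathbb{E}(N(n,m,d))+\sqrt{n\log n}$, and because $\alpha\ge\sfrac{1}{6}$ forces $1-3\alpha\le\sfrac{1}{2}$, the fluctuation $\sqrt{n\log n}$ dominates the mean, so $\sfrac{N(n,m,d)}{n}=\mathcal{O}(\sqrt{\sfrac{\log n}{n}})\to 0$ with probability tending to $1$.

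The main obstacle is conceptual, and it is exactly the phenomenon the corollary is meant to expose. For $\alpha>\sfrac{1}{6}$ the would-be power-law proportion $\sfrac{\mathbb{E}(N)}{n}\sim\sfrac{1}{d^{3}}=n^{-3\alpha}$ is strictly smaller than the concentration radius per vertex $\sqrt{\sfrac{\log n}{n}}$, so the Azuma--Hoeffding window of width $\sqrt{n\log n}$ swallows the mean $\sfrac{n}{d^{3}}=\mathcal{O}(n^{\sfrac{1}{2}})$ and the ``signal'' can no longer be separated from the noise. The hard part is therefore not the estimate but the interpretation: one must argue that this collapse to $0$ is intrinsic rather than an artifact of a loose bound, i.e. that no constant-difference martingale can resolve a mean below order $\sqrt{n}$. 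This is precisely the Azuma--Hoeffding barrier invoked in the abstract, and it is why $\sfrac{1}{6}$ is the edge of the method: past it, the only statement that survives is that the proportion is $0$ in the limit.
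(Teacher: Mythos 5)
Your proposal is correct, and its Azuma--Hoeffding half is exactly the paper's argument: the paper's (very terse) proof of the corollary is precisely the observation that on the concentration event $|N(n,m,d)-\mathbb{E}(N(n,m,d))|\le\sqrt{n\log n}$ furnished by Lemmas~\ref{lemma_4} and~\ref{lemma_azuma}, the mean $\mathbb{E}(N(n,m,d))=\Theta(n/d^{3})=\Theta(n^{1-3\alpha})$ with $1-3\alpha\le\sfrac{1}{2}$ is swallowed by the deviation term, so $N(n,m,d)/n=\mathcal{O}\left(\sqrt{\sfrac{\log n}{n}}\right)\to 0$ with probability tending to one. You add two things the paper leaves implicit, both to your credit. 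First, you actually verify that the expectation estimate survives on the whole window $\alpha\in[\sfrac{1}{6},\sfrac{1}{3})$, by noting the peak of $\sqrt{\sfrac{k}{n}}\left(1-\sqrt{\sfrac{k}{n}}\right)^{d}$ sits at $k_{0}=n/d^{2}\le n^{\sfrac{2}{3}}$ and invoking Case~3 of Lemma~\ref{lemma_11} with $\beta$ near $1$ (so $d\ge n^{\frac{1-\beta}{2}}$ and the system~\ref{system_3} is feasible); the paper simply reuses $\mathbb{E}(N)\approx n/d^{3}$ without this check. Second, your Markov shortcut is genuinely more elementary: since only $\mathbb{E}(N(n,m,d))=o(n)$ is needed, $\mathbb{P}\mathrm{r}\left[N(n,m,d)/n\ge\epsilon\right]\le\mathbb{E}(N(n,m,d))/(\epsilon n)=\mathcal{O}(n^{-\sfrac{1}{2}})$ dispenses with the martingale machinery entirely for this corollary. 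Finally, your decision to read the conclusion as vanishing of the proportion in probability is the right one and matches what the paper actually proves: the literal event $\{N(n,m,d)=0\}$ fails with high probability near $d=n^{\sfrac{1}{6}}$, where $\mathbb{E}(N)=\Theta(\sqrt{n})$, and the paper's displayed chain of deterministic limits (which omits the probability qualifiers you supply) establishes only the vanishing-proportion statement.
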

This corollary establishes the fact that for $d$-ranges greater than $\sfrac{1}{6}$, the probability of attaining a fraction of the vertices following a power-law degree distribution tends to zero as $n$ grows large. Therefore, from such a perspective, the bound in Theorem~\ref{Theo:US} can be considered tight. 
\begin{proof}
The proof is quite straight-forward. It is enough to recognize that if $\bm{\alpha} > \sfrac{1}{6}$, the term $\sqrt{\frac{\log n}{n}}$ as opposed to $\frac{n}{d^{3}}$ will dominate the expectation $\mathbb{E}(N(n,m,d))$. Therefore: 
\begin{align*}
0\leq &\lim_{n\rightarrow \infty}\frac{N(n,m,d)}{n} \leq \lim_{n\rightarrow \infty} \sqrt{\frac{\log n}{n}} =0 \enspace .
\end{align*}
This implies that the probability to find a fraction of the vertices following a power-law degree distribution tends to zero as $n$ grows large. 
\end{proof}


\end{document}